\documentclass[11pt,english,a4paper]{amsart}
\pdfoutput=1

\usepackage[utf8]{inputenc}
\usepackage[T1]{fontenc}
\usepackage{lmodern}
\usepackage{babel}
\usepackage{amsmath,amssymb,amsthm}
\usepackage[marginratio={1:1,1:1},totalwidth=404pt,totalheight=606pt]{geometry}
\usepackage{microtype}
\usepackage{hyperref}
\usepackage{url}

\theoremstyle{plain}
\newtheorem{theorem}{Theorem}[section]
\newtheorem{lemma}[theorem]{Lemma}
\newtheorem{proposition}[theorem]{Proposition}
\newtheorem{corollary}[theorem]{Corollary}
\newtheorem{theoremx}{Theorem}

\theoremstyle{definition}
\newtheorem{remark}[theorem]{Remark}
\newtheorem{example}[theorem]{Example}

\newcommand{\C}{\mathbb{C}}
\newcommand{\N}{\mathbb{N}}
\newcommand{\Z}{\mathbb{Z}}
\newcommand{\R}{\mathbb{R}}
\newcommand{\T}{\mathbb{T}}
\newcommand{\F}{\mathbb{F}}
\newcommand{\Cr}{C^*_r}
\newcommand{\Zst}{\mathcal{Z}}
\DeclareMathOperator{\Mod}{Mod}
\DeclareMathOperator{\Fix}{Fix}
\DeclareMathOperator{\aut}{Aut}

\DeclareMathOperator{\Hom}{Hom}

\title[$\Zst$-stable and selfless inclusions]{$\Zst$-stable and selfless inclusions for the infinite braid group}

\author[Omland]{Tron Omland}
\address{Norwegian National Security Authority (NSM) \and Department of Mathematics, University of Oslo, Norway}
\email{tron.omland@gmail.com}

\date{September 29, 2026}

\begin{document}

\begin{abstract}
Let $B_\infty$ and $P_\infty$ be the braid group and the pure braid group on infinitely many strands. We show that the inclusion $\Cr(P_\infty)\subseteq\Cr(B_\infty)$ is $\Zst$-stable and selfless. Hence every intermediate inclusion and every intermediate $C^*$-algebra is $\Zst$-stable and selfless. The intermediate $C^*$-algebras are natural examples of simple nonnuclear $\Zst$-stable $C^*$-algebras, and they are singly generated. The proof of $\Zst$-stability uses a simple relative centralizer criterion. We also show that $\Cr(\F_\infty)\subseteq\Cr(B_\infty)$ is a selfless inclusion, where $\F_\infty$ is the free group of braids in which only the first string moves. All results hold for reduced twisted group $C^*$-algebras. Selflessness of $\Cr(B_\infty)$ itself already follows from work of Ozawa. On the other hand, $\Zst$-stability fails for all twisted group $C^*$-algebras of groups that are not inner amenable. A variant shows that it fails for all twisted group $C^*$-algebras of $B_n$ with $n$ finite, and for the twisted crossed products from Artin's representation of $B_n$. Finally the method applies to other increasing unions of groups, such as $\mathrm{SL}_\infty(\Z)$, whose reduced group $C^*$-algebra is $\Zst$-stable.
\end{abstract}

\maketitle

\section{Introduction}

For every $2\leq n\leq\infty$ the \emph{braid group $B_n$} is defined by generators $s_1,s_2,\dotsc,s_{n-1}$ subject to the relations
\[
\begin{gathered}
s_is_{i+1}s_i=s_{i+1}s_is_{i+1}\text{ for all }1\leq i\leq n-2, \\
s_is_j=s_js_i\text{ when }\lvert i-j\rvert\geq2.
\end{gathered}
\]
Sending $s_i$ to the transposition $(i,i+1)$ defines a surjection from $B_n$ onto the symmetric group $S_n$. Its kernel is the \emph{pure braid group $P_n$}. Here $S_\infty$ is the group of finitary permutations of $\N$. For $m<n\leq\infty$ we consider $B_m$ as the subgroup of $B_n$ coming from the first $m$ strands. Then $B_\infty=\bigcup_nB_n$ and $P_\infty=\bigcup_nP_n$. The group $P_n$ is generated by the braids
\[
a_{i,j}=s_{j-1}\dotsm s_{i+1}s_i^2s_{i+1}^{-1}\dotsm s_{j-1}^{-1},\qquad 1\leq i<j\leq n.
\]
Both $B_\infty$ and $P_\infty$ are $C^*$-simple \cite[Section~6]{Om20}. Moreover the inclusion $\Cr(P_\infty)\subseteq\Cr(B_\infty)$ is $C^*$-irreducible \cite[Example~7.4]{BO23}. That is, all its intermediate $C^*$-algebras are simple. By the Galois correspondence of \cite[Theorem~4.4]{CS19}, they are exactly the algebras $\Cr(\Gamma)$ with $P_\infty\leq\Gamma\leq B_\infty$. There are $2^{\aleph_0}$ such groups, since $S_\infty$ contains $\bigoplus_\N\Z/2\Z$.

Let $\Zst$ be the Jiang--Su algebra. An inclusion $D\subseteq A$ of separable unital $C^*$-algebras is \emph{$\Zst$-stable} if there is an isomorphism $\Phi\colon A\to A\otimes\Zst$ with $\Phi(D)=D\otimes\Zst$ \cite{Sa25}. Selfless inclusions \cite{HKEPR} are recalled in Section~\ref{sec:selfless}. An \emph{intermediate inclusion} of $D\subseteq A$ is an inclusion $E\subseteq F$ of $C^*$-algebras with $D\subseteq E\subseteq F\subseteq A$. For $\Cr(P_\infty)\subseteq\Cr(B_\infty)$ these are the inclusions $\Cr(\Gamma_1)\subseteq\Cr(\Gamma_2)$ with $P_\infty\leq\Gamma_1\leq\Gamma_2\leq B_\infty$.

\begin{theoremx}\label{t:A}
The inclusion $\Cr(P_\infty)\subseteq\Cr(B_\infty)$ is $\Zst$-stable and selfless. Hence every intermediate inclusion and every intermediate $C^*$-algebra is $\Zst$-stable and selfless. In particular each intermediate $C^*$-algebra is simple with a unique tracial state, and it has stable rank one and strict comparison.
\end{theoremx}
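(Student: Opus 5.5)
We reduce both properties to a relative central sequence construction built from the shift structure of $B_\infty$, and then transfer them to intermediate inclusions. The underlying observation is that braids supported on far-away strands commute with any fixed finite braid. Concretely, for each $k$ let $\sigma_k\colon B_\infty\to B_\infty$ be the shift endomorphism $s_i\mapsto s_{i+k}$. It is injective and maps $P_\infty$ into $P_\infty$. Moreover, for $g\in B_n$ and $k\geq n$, the element $\sigma_k(h)$ commutes with $g$ for every $h\in B_\infty$. Since $\sigma_k$ is injective it induces a trace-preserving embedding $\Cr(B_\infty)\to\Cr(B_\infty)$, carrying $\Cr(P_\infty)$ into itself.

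For $\Zst$-stability we use the relative centralizer criterion, in the form it will be proved in the paper. It says that if for some unital separable $C^*$-algebra $\Zst$ (or a dimension drop algebra) admits a unital embedding into $D_\omega\cap A'$, then $D\subseteq A$ is $\Zst$-stable. Here $D_\omega\cap A'$ is the relative central sequence algebra, meaning sequences from $D$ that asymptotically commute with $A$. This is the relative version of Toms--Winter, and it is analogous to the criterion for $\Zst$-stability of inclusions in \cite{Sa25}.

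The construction proceeds in three steps.
\begin{enumerate}
\item Find a unital $C^*$-algebra $C$ inside $\Cr(P_\infty)$, or inside a twisted copy of it, that is supported on finitely many strands and contains a unital copy of a dimension drop algebra $Z_{p,p+1}$, or at least admits one approximately.
\item Conjugate $C$ out to infinity by $\sigma_{k_j}$ with $k_j\to\infty$. Each $\sigma_{k_j}(C)\subseteq\Cr(P_\infty)$ commutes exactly with $\Cr(B_{k_j})$, hence asymptotically with all of $\Cr(B_\infty)$.
\item The resulting sequence gives a unital embedding $Z_{p,p+1}\to \Cr(P_\infty)_\omega\cap\Cr(B_\infty)'$, and the criterion yields $\Zst$-stability.
\end{enumerate}

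The main obstacle is step 1. $\Cr(P_n)$ is far from containing matrix algebras, and $\Cr(P_\infty)$ has no nontrivial projections in any obvious way. I would therefore aim not for a dimension drop algebra itself but for the weaker tracial/order-zero version of the criterion. That version asks for an order zero map from $M_p$ into the relative central sequence algebra whose complement is tracially small, together with strict comparison provided by selflessness. Matrix-like order zero maps can be extracted from free products: $P_\infty$ contains many free subgroups $\langle a_{1,j}\rangle$, and $\Cr(\F_2)$ is known to be $\Zst$-absorbing in the appropriate approximate sense through Robert's theorem. Alternatively, one can use that selfless algebras with the relevant comparison are $\Zst$-stable up to the central sequence data. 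I would combine the shifts $\sigma_k$ with these free copies supported on fresh strands.

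For selflessness of the inclusion we follow \cite{HKEPR}. We need an embedding of $A$ into a reduced free product $(A,\tau)*(B,\tau)$ for some nontrivial $B$, implemented asymptotically by the inclusion, with the $D$-part landing in $D*B$. Again we use shifts: $\Cr(B_\infty)$ and $\sigma_k(\Cr(\F_\infty))$ become asymptotically free with amalgamation trivial in the ultrapower, by Ozawa's argument for $B_\infty$. The shifted free group $\sigma_k(\F_\infty)$ lies in $P_\infty$, which gives the relative version.

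The passage to intermediate algebras is formal. If $D\subseteq A$ is $\Zst$-stable (respectively selfless) via sequences in $D$, then the same sequences serve for any $D\subseteq E\subseteq F\subseteq A$. The last claims follow from known consequences of selflessness (Robert): simplicity, a unique trace, stable rank one, and strict comparison.
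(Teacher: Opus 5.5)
\textbf{$\Zst$-stability.} Your architecture is the right one: push a finite-strand piece of $P_\infty$ out by the shifts so that it commutes with $B_k$, then apply the relative central sequence criterion. But step~1 is left open. The missing idea is simply that $\Zst$ itself embeds unitally in $\Cr(\F_2)$ \cite[Proposition~4.2 and Remark~4.3]{ThW14}. Take $C=\Cr(\langle s_1^2,s_2^2\rangle)\simeq\Cr(\F_2)$, or $\Cr(\langle a_{1,2},a_{1,3}\rangle)$, which you mention. The shifts of $\langle s_1^2,s_2^2\rangle$ are the groups $T_k$ of Lemma~\ref{l:tails}, and the argument then finishes exactly as in Theorem~\ref{t:criterion}. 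The absence of projections is no obstacle, since $\Zst$ and the $Z_{p,p+1}$ are projectionless.

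The fallback you propose instead does not work. The known passage from tracial $\Zst$-absorption to $\Zst$-stability requires nuclearity, which fails here because $P_\infty$ is not amenable. Your other claims are false. $\Cr(\F_2)$ is not $\Zst$-stable by Proposition~\ref{p:inner}. Selflessness plus strict comparison does not give $\Zst$-stability either: $\Cr(\F_\infty)$ is selfless by Theorem~\ref{t:B}, has strict comparison and a unique trace, yet is not $\Zst$-stable.

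\textbf{Selflessness.} This part has a genuine error. Shifted elements commute with $\Cr(B_k)$, so sequences $(\sigma_k(x))_k$ land in $\Cr(P_\infty)^\omega\cap\Cr(B_\infty)'$, and this is incompatible with freeness. For instance, $y=(\lambda(a_{k+1,k+2}))_k$ and $u=\lambda(s_1)$ are centered unitaries that commute. So $\tau^\omega(yuy^*u^*)=1$, whereas freeness would force $0$. The same computation rules out every nonzero centered element of the shifted algebra. Note also that you stated the definition in reverse. One needs a state-preserving map $(A,\tau)*(C,\kappa)\to A^\omega$ that is diagonal on $A$ and sends $C$ into $D^\omega$.

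Ozawa's argument gives selflessness of $\Cr(B_\infty)$ alone, but it does not produce witnesses in $P_\infty$ (Remark~\ref{r:php}). What you need are elements $g\in P_\infty$ of infinite order with $\langle B_n,g\rangle=B_n*\langle g\rangle$. These are braids entangled with the first $n$ strands, not far away from them. The paper obtains them as large powers of a pseudo-Anosov point push in $\F_n\leq B_{n+1}$, using \cite[Proposition~2.1]{AD19} for the action on the curve graph (Proposition~\ref{p:free}). Lemma~\ref{l:free} then sends the Haar unitary of $C(\T)$ to $(\lambda(g_k))_k$. Alternatively, once $\Zst$-stability of the inclusion is established, selflessness follows from \cite[Theorem~2.9]{HKEPR} together with the relative Dixmier property (Remark~\ref{r:pams}).

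The passage to intermediate inclusions and the final consequences via Robert's theorem are fine as you state them.
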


Since $P_\infty$ is not amenable, the intermediate $C^*$-algebras are nonnuclear, so Theorem~\ref{t:A} gives natural examples of simple nonnuclear $\Zst$-stable $C^*$-algebras. Moreover every unital separable $\Zst$-stable $C^*$-algebra is singly generated \cite[Theorem~3.7]{ThW14}. So each intermediate $C^*$-algebra is singly generated.

The proof of $\Zst$-stability is short. Each finite set of braids commutes with a free group of rank two on three strands far out. The algebra $\Zst$ embeds unitally in $\Cr(\F_2)$ \cite[Proposition~4.2 and Remark~4.3]{ThW14}. This gives approximately central copies of $\Zst$ in $\Cr(P_\infty)$. Section~\ref{sec:criterion} gives the general criterion behind this. It also applies with $P_\infty$ replaced by its commutator subgroup (Corollary~\ref{c:braids}). The selflessness in Theorem~\ref{t:A} can also be deduced from the $\Zst$-stability. For the intermediate $C^*$-algebras this is \cite[Theorem~3]{Oz}. For the inclusion the argument is similar to the amenable case in \cite{Om26} (Remark~\ref{r:pams}).

Let $\F_\infty\leq P_\infty$ be the free group on the generators $\{a_{1,j}\}_{j=2}^\infty$. It consists of the braids in which only the first string moves. Moreover $P_\infty\simeq\F_\infty\rtimes_\alpha P_\infty$, where $\alpha$ is Artin's representation \cite[Section~6]{Om20}.

\begin{theoremx}\label{t:B}
The inclusion $\Cr(\F_\infty)\subseteq\Cr(B_\infty)$ is selfless. Hence every intermediate inclusion and every intermediate $C^*$-algebra is selfless.
\end{theoremx}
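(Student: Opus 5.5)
We will prove Theorem~\ref{t:B} by checking the definition of a selfless inclusion from \cite{HKEPR}, recalled in Section~\ref{sec:selfless}. The definition asks for a \emph{free} embedding into an ultrapower. For $D\subseteq A$ with faithful trace $\tau$, we want a unital embedding $\iota\colon A\to (A*_\tau \Cr(\Z))^\omega$, or into a suitable ultrapower of $A$ itself, with three properties: it restricts to the diagonal on $A$, it maps $D$ into $D^\omega$, and its image is freely independent from a Haar unitary lying in $D^\omega$. The hereditary statements are then formal. If $D\subseteq E\subseteq F\subseteq A$, the same free Haar unitary lies in $D^\omega\subseteq E^\omega$, and the same witness restricts to $F$. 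This is presumably how the paper deduces the intermediate cases in Theorem~\ref{t:A}, so I would quote that lemma. All intermediate algebras are $\Cr(\Gamma)$ for groups between $\F_\infty$ and $B_\infty$. Here I would still need Galois-type control, which should follow from $C^*$-irreducibility as in \cite{BO23} combined with \cite{CS19}. Even without it, the definition applies to arbitrary intermediate $C^*$-algebras.

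The concrete construction uses the free generators $a_{1,j}$ lying far out. For a finite set $S\subseteq B_\infty$, choose $n$ with $S\subseteq B_n$ and set $u_n=\lambda(a_{1,n+1})\in\Cr(\F_\infty)$. The key group-theoretic claim is that $a_{1,n+1}$ is ``free from $B_n$'': the subgroup generated by $B_n$ and $a_{1,n+1}$ is the free product $B_n*\langle a_{1,n+1}\rangle$. Granting the claim, freeness holds exactly in $\Cr(B_\infty)$ with respect to the canonical trace. Letting $n\to\infty$ along $\omega$, the element $u=(u_n)_\omega\in\Cr(\F_\infty)^\omega$ is a Haar unitary freely independent from the diagonal copy of $\Cr(B_\infty)$. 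This is exactly a selfless witness for the inclusion: it is Ozawa-type, via the diagonal embedding, with the free unitary taken inside the small algebra.

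The main obstacle is the freeness claim. I would attack it with Artin's representation and the decomposition $P_{n+1}\simeq\F_n\rtimes_\alpha P_{n+1}$ mentioned above, or more directly with the action on $\pi_1$ of the punctured disc. Strand $n+1$ is a puncture outside the disc containing the first $n$ strands. The element $a_{1,n+1}$ drags strand~$1$ around that puncture. A ping-pong argument then applies, using the sets of mapping classes whose effect on a suitable arc crosses the region around puncture $n+1$. However, $a_{1,n+1}$ involves strand $1$, which $B_n$ also moves, so exact freeness may fail. In that case I would switch to conjugates $g_n a_{1,n+1} g_n^{-1}$ with $g_n\in\F_\infty$, or to elements $a_{1,m}a_{1,m'}^{-1}$ for large $m,m'$. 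For these I would verify only asymptotic freeness: each reduced alternating word in $B_n\setminus\{e\}$ and nonzero powers of the unitary should eventually be nontrivial in the group. This is a word problem that should reduce to checking that the relevant free reduction in $\F_\infty$, under Artin's action of $P_n$, does not collapse. Asymptotic freeness suffices in the ultrapower. Finally, the twisted case goes through identically, since the freeness computation only uses the support of group elements.
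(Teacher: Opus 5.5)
Your key claim, that $\langle B_n,a_{1,n+1}\rangle=B_n*\langle a_{1,n+1}\rangle$, is false, and your fallbacks do not repair it. For $n\geq3$ and $2\leq k\leq n-1$, write $a_{1,n+1}=ws_1^2w^{-1}$ with $w=s_n\dotsm s_2$. The braid relations give $s_kw=ws_{k+1}$, and $s_{k+1}$ commutes with $s_1$, so $s_ka_{1,n+1}s_k^{-1}=a_{1,n+1}$. Hence $s_2a_{1,n+1}s_2^{-1}a_{1,n+1}^{-1}=e$ is a nontrivial alternating relation, and the free product claim fails.

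The obstruction is not the shared first strand. Geometrically, $a_{1,n+1}$ is a Dehn twist about a curve that misses the arcs of $s_2,\dotsc,s_{n-1}$. The same computation shows that all of $\langle a_{1,j}:j>n\rangle$ commutes with $\langle s_2,\dotsc,s_{n-1}\rangle$. So for $g=a_{1,m}a_{1,m'}^{-1}$ with $m,m'>n$, the word $s_2gs_2^{-1}g^{-1}$ is trivial for every $n$, and even your asymptotic freeness fails. The conjugates $ga_{1,n+1}g^{-1}$ are again Dehn twists, hence elliptic on the curve graph, and you give no argument for them.

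The missing idea is to take the witness to be a large power $\beta^N$ of some $\beta\in\F_n=\langle a_{1,2},\dotsc,a_{1,n+1}\rangle$ that is pseudo-Anosov on the sphere with punctures $p_1,\dotsc,p_{n+1},p_\infty$. An example is the point push of $p_1$ along a filling loop.
\begin{itemize}
\item The capping map $q_{n+1}$ is injective on $B_n$, since $Z(B_{n+1})\cap B_n=\{e\}$.
\item Its image is torsion-free and fixes the curve around $p_1,\dotsc,p_n$. So it is elliptic for the acylindrical action on the curve graph.
\item Acylindricity bounds its quasi-fixed set, and the image meets the elementary subgroup $E(q_{n+1}(\beta))$ trivially.
\item Now \cite[Proposition~2.1]{AD19} gives $\langle B_n,\beta^N\rangle=B_n*\langle\beta^N\rangle$ for large $N$.
\end{itemize}
By the computation above, the witness must also use generators $a_{1,j}$ with $j\leq n$. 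Every element of $\langle a_{1,j}:j>n\rangle$ commutes with $s_2,\dotsc,s_{n-1}$.

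Your assertion that asymptotic freeness suffices in the ultrapower is also unjustified. The definition asks for a $*$-homomorphism from the reduced free product $\Cr(B_\infty)*C(\T)$ into the norm ultrapower. Eventual nontriviality of each alternating word only gives freeness with respect to $\tau^\omega$, that is, convergence of all moments. Since $\tau^\omega$ is not faithful on $\Cr(B_\infty)^\omega$, this yields only $\lim_\omega\lVert p(\lambda(h_1),\dotsc,\lambda(h_r),\lambda(g_n))\rVert\geq\lVert p\rVert$ for noncommutative polynomials $p$. A contractive $*$-homomorphism needs the reverse inequality, that is, some form of strong convergence. An exact free product $\langle E_k,g_k\rangle=\langle E_k\rangle*\langle g_k\rangle$ gives this directly. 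Then $\Cr(\langle E_k\rangle)*C(\T)\to\Cr(B_\infty)$ is an isometric trace-preserving embedding at each stage, and these maps assemble into the required one, as in Lemma~\ref{l:free}. Your reduction of the intermediate cases to the inclusion itself is fine.
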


The main input is that $B_n$ and a large power of a pseudo-Anosov braid in $B_{n+1}$ generate their free product (Proposition~\ref{p:free}). This is a consequence of \cite[Proposition~2.1]{AD19}. Since $\F_\infty\leq P_\infty$, Theorem~\ref{t:B} gives the selflessness in Theorem~\ref{t:A}. Theorem~\ref{t:B} does not follow from $\Zst$-stability, since $\Cr(\F_\infty)$ is not $\Zst$-stable (Proposition~\ref{p:inner}). Theorem~\ref{t:B} also shows that $\Cr(\F_\infty)\subseteq\Cr(B_\infty)$ is $C^*$-irreducible, although $\F_\infty$ is not normal in $B_\infty$ (Remark~\ref{r:irred}).

By Lemma~\ref{l:annular}, Theorem~\ref{t:B} has a version for Artin's representation $\alpha$ of $B_\infty$ on $\F_\infty$. For every $\Lambda\leq B_\infty$ the coefficient inclusion $\Cr(\F_\infty)\subseteq\Cr(\F_\infty)\rtimes^r_{\alpha}\Lambda$ is selfless. The same holds for the twisted actions $\alpha^\varphi$ of \cite{Om20} (Corollary~\ref{c:artin}). Here $\Lambda$ is arbitrary, for instance $B_\infty$, $P_\infty$, or an amenable subgroup. The intermediate $C^*$-algebras of these coefficient inclusions are exactly the crossed products by subgroups of $\Lambda$ (Remark~\ref{r:galois}).

Selflessness is known for many reduced group $C^*$-algebras \cite{AGKEP,Oz}. For $\Cr(B_\infty)$ alone it already follows from Ozawa's property $\mathrm{P}_{\mathrm{PHP}}$ \cite[Section~8]{Oz}, and $\Cr(B_\infty)$ is even completely selfless (Remark~\ref{r:php}). It also follows from \cite{Oz} and \cite[Theorem~4.1]{Ro25}, since Proposition~\ref{p:free} lets one write $B_\infty$ as an increasing union of nonelementary free products. These free products are not inner amenable \cite{DGO}, so this route does not give $\Zst$-stability (Proposition~\ref{p:inner}).

All results hold for reduced twisted group $C^*$-algebras, also when the $2$-cocycle is only defined on a subgroup of $B_\infty$ that contains $P_\infty$ or $\F_\infty$ (Section~\ref{sec:twists}). The only extra step is to pass to commutators in the free centralizers. This cannot be avoided in general (Example~\ref{ex:scalars}).

Section~\ref{sec:criterion} also records a simple obstruction. If $G$ is not inner amenable, then $\Cr(G,\sigma)$ is not $\Zst$-stable for any $2$-cocycle $\sigma$ (Proposition~\ref{p:inner}). A variant shows that $\Cr(B_n,\sigma)$ is not $\Zst$-stable for finite $n$ (Section~\ref{sec:artin}). The same holds for the crossed products of \cite{Om20} (Proposition~\ref{p:finite}). For $n=\infty$ they are $\Zst$-stable and selfless (Corollary~\ref{c:artin}). Finally Section~\ref{sec:stable} applies Theorem~\ref{t:criterion} to other increasing unions with block sums, such as $\mathrm{SL}_\infty(\Z)$.

\section{\texorpdfstring{$\Zst$}{Z}-stability and inner amenability}\label{sec:criterion}

For a separable unital $C^*$-algebra $A$ put $A_\infty=\ell^\infty(\N,A)/c_0(\N,A)$. We view $A$ as the constant sequences in $A_\infty$. If $D\subseteq A$ is a unital inclusion, then $D_\infty\subseteq A_\infty$. Suppose there is a unital $*$-homomorphism $\Zst\to D_\infty\cap A'$. Then $D\subseteq A$ is $\Zst$-stable by \cite[Theorem~4.4]{Sa25}, after passing to an ultrapower. The same map lands in $E_\infty\cap F'$ for every intermediate inclusion $E\subseteq F$. Hence every intermediate inclusion and every intermediate $C^*$-algebra is $\Zst$-stable by the same theorem.

For $E\subseteq G$ let $C_G(E)$ denote the centralizer of $E$ in $G$.

\begin{theorem}\label{t:criterion}
Let $H\leq G$ be countable groups. Suppose that $H\cap C_G(E)$ contains a copy of $\F_2$ for every finite set $E\subseteq G$. Then there is a unital embedding $\Zst\to\Cr(H)_\infty\cap\Cr(G)'$. Consequently $\Cr(H)\subseteq\Cr(G)$ and all its intermediate inclusions and $C^*$-algebras are $\Zst$-stable.
\end{theorem}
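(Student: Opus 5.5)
Fix a unital embedding $\iota\colon\Zst\to\Cr(\F_2)$, which exists by \cite[Proposition~4.2 and Remark~4.3]{ThW14}. The plan is to place copies of $\Cr(\F_2)$ inside $\Cr(H)$ that commute with larger and larger pieces of $\Cr(G)$. Composing $\iota$ with these copies will give an approximately central sequence of copies of $\Zst$.

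First choose an increasing sequence of finite sets $E_1\subseteq E_2\subseteq\dotsb$ with $\bigcup_nE_n=G$; this uses only that $G$ is countable. By hypothesis there is, for each $n$, a subgroup $F^{(n)}\leq H\cap C_G(E_n)$ with $F^{(n)}\simeq\F_2$. The inclusion of groups $F^{(n)}\leq H$ induces a unital embedding $\Cr(F^{(n)})\to\Cr(H)$, because reduced group $C^*$-algebras of subgroups embed canonically. Identifying $\Cr(F^{(n)})\simeq\Cr(\F_2)$, we obtain unital $*$-homomorphisms $\phi_n\colon\Zst\to\Cr(H)$. The image of $\phi_n$ lies in the closed span of $\{\lambda_g: g\in F^{(n)}\}$, and each such $\lambda_g$ commutes exactly with $\lambda_e$ for all $e\in E_n$.

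Next define $\phi\colon\Zst\to\Cr(H)_\infty$ by $\phi(z)=[(\phi_n(z))_n]$. This is a unital $*$-homomorphism, since it is induced from the product map into $\ell^\infty(\N,\Cr(H))$. It is injective because $\Zst$ is simple and $\phi$ is unital.

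For centrality, fix $g\in G$. Then $g\in E_n$ for all large $n$, so $[\phi_n(z),\lambda_g]=0$ eventually, and hence $\phi(z)$ commutes with $\lambda_g$ in $\Cr(G)_\infty$. The set of elements $a\in\Cr(G)$ commuting with $\phi(\Zst)$ is a closed $*$-subalgebra. Since it contains every $\lambda_g$, it is all of $\Cr(G)$. So $\phi$ lands in $\Cr(H)_\infty\cap\Cr(G)'$, viewed inside $\Cr(G)_\infty$.

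The consequence then follows directly from the paragraph preceding the theorem, namely \cite[Theorem~4.4]{Sa25} applied to every intermediate pair $E\subseteq F$. There is no real obstacle, because the commutation is exact rather than approximate. The only point needing care is that $\Cr(H)_\infty$ embeds in $\Cr(G)_\infty$. This holds because $\Cr(H)\subseteq\Cr(G)$ isometrically, so $c_0$ sequences in $\Cr(G)$ with entries in $\Cr(H)$ are exactly the $c_0$ sequences in $\Cr(H)$.
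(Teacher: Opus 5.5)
Your proposal is correct and follows the paper's proof. It exhausts $G$ by finite sets $E_k$, picks copies of $\F_2$ in $H\cap C_G(E_k)$, composes a unital embedding $\Zst\to\Cr(\F_2)$ with $\Cr(F_k)\subseteq\Cr(H)$, and then uses density of the span of the $\lambda(g)$ and simplicity of $\Zst$. Your closed-commutant argument in $\Cr(G)_\infty$ and your check that $\Cr(H)_\infty\subseteq\Cr(G)_\infty$ are only slightly more explicit versions of steps the paper states directly.
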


\begin{proof}
Choose finite sets $E_1\subseteq E_2\subseteq\dotsb$ with union $G$. Choose $F_k\leq H\cap C_G(E_k)$ with $F_k\simeq\F_2$. There are unital embeddings $\varphi_k\colon\Zst\to\Cr(F_k)\subseteq\Cr(H)$ \cite[Proposition~4.2 and Remark~4.3]{ThW14}. Each $\lambda(g)$ commutes with the image of $\varphi_k$ for large $k$. These elements span a dense subspace of $\Cr(G)$. Hence $\lVert[\varphi_k(c),a]\rVert\to0$ for all $c\in\Zst$ and $a\in\Cr(G)$. Thus the $\varphi_k$ induce a unital $*$-homomorphism $\Zst\to\Cr(H)_\infty\cap\Cr(G)'$. It is injective since $\Zst$ is simple.
\end{proof}

Note that $H$ need not be normal in $G$, and $\Cr(G)$ need not be simple or exact.

Let $\sigma$ be a $2$-cocycle on $G$ and let $\lambda_\sigma$ be the left regular $\sigma$-projective representation (Section~\ref{sec:twists}). The canonical trace $\tau$ on $\Cr(G,\sigma)$ gives the norm $\lVert x\rVert_2=\tau(x^*x)^{1/2}$. Each $x\in\Cr(G,\sigma)$ has Fourier coefficients $\hat x(g)=\tau(\lambda_\sigma(g)^*x)$, and $\lVert x\rVert_2^2=\sum_g\lvert\hat x(g)\rvert^2$. Recall that $G$ is \emph{inner amenable} if $\ell^\infty(G\setminus\{e\})$ has a mean that is invariant under conjugation.

\begin{proposition}\label{p:inner}
Let $C$ be a central subgroup of a countable group $G$ such that $G/C$ is not inner amenable. Let $\sigma$ be a $2$-cocycle on $G$ such that $\Cr(C,\sigma)$ is commutative. Then $\Cr(G,\sigma)$ is not $\Zst$-stable. In particular $\Cr(G,\sigma)$ is not $\Zst$-stable for any $\sigma$ when $G$ is not inner amenable.
\end{proposition}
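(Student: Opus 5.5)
We argue by contradiction, using central sequences. Suppose $A=\Cr(G,\sigma)$ is $\Zst$-stable. Then by the standard characterization (Toms--Winter, or \cite[Theorem~4.4]{Sa25} with $D=A$) there is a unital embedding $\Zst\to A_\infty\cap A'$. Since $\Zst$ contains, for instance, a unitary $u$ with full spectrum and zero trace, we obtain a central sequence $(x_k)$ in $A$ that is not asymptotically a scalar. Concretely, take $x_k$ self-adjoint with $\lVert x_k\rVert\le1$, $\tau(x_k)=0$ and $\lVert x_k\rVert_2\ge\delta>0$, and with $\lVert[x_k,a]\rVert\to0$ for all $a\in A$. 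Nonvanishing of the $2$-norm comes from the unique trace of $\Zst$: $\tau$ composed with the embedding gives a trace on $\Zst$, hence its unique trace, so a fixed nonscalar element keeps a positive $2$-norm. Then $\lVert\lambda_\sigma(g)x_k\lambda_\sigma(g)^*-x_k\rVert_2\to0$ for every $g\in G$.

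Next we compute what conjugation does to Fourier coefficients. By the cocycle identity, $\lambda_\sigma(g)\lambda_\sigma(h)\lambda_\sigma(g)^*=\omega(g,h)\lambda_\sigma(ghg^{-1})$ with $\lvert\omega\rvert=1$. So the function $\lvert\hat x_k\rvert^2$ on $G$ is transported by conjugation, up to an error $\lVert gx_kg^*-x_k\rVert_2\cdot 2\lVert x_k\rVert_2$, in $\ell^1$. Define the probability-type measure $\mu_k$ on $G/C$ by pushing $\lvert\hat x_k\rvert^2$ forward to $G/C$. Pushing forward is $\ell^1$-contractive and commutes with conjugation, since $C$ is central and conjugation descends. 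So $\mu_k$ is asymptotically conjugation invariant, with total mass $\lVert x_k\rVert_2^2\in[\delta^2,1]$.

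The main obstacle is to show that $\mu_k$ does not concentrate on the identity coset $C$. Otherwise a weak$^*$ limit of the normalized $\mu_k$ restricted to $(G/C)\setminus\{e\}$ would give a conjugation-invariant mean, contradicting non-inner-amenability of $G/C$. Here we use that $\Cr(C,\sigma)$ is commutative. Let $E\colon A\to\Cr(C,\sigma)$ be the conditional expectation, which is $2$-norm contractive and keeps coefficients on $C$. If $\mu_k(\{eC\})$ were close to $\lVert x_k\rVert_2^2$, then $x_k$ would be $2$-close to $y_k=E(x_k)\in\Cr(C,\sigma)$.

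We then invoke centrality of $x_k$ against a second, nonscalar central sequence built from $\Zst$. Take two elements $a,b\in\Zst$ with $\lVert[a,b]\rVert_2>0$ in the trace of $\Zst$; this exists because $\Zst$ is noncommutative. Their images give sequences $(a_k),(b_k)$ that are central in $A$ with $\tau([a_k,b_k]^*[a_k,b_k])\to c>0$. If both were asymptotically in $\Cr(C,\sigma)$ in $2$-norm, their commutator would be $2$-small, by norm boundedness and commutativity of $\Cr(C,\sigma)$. So at least one of them, say $a_k$, satisfies $\lVert a_k-E(a_k)\rVert_2\ge\delta'$ along a subsequence.

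We apply the argument above to $x_k=a_k-E(a_k)$. We need $E(a_k)$ to be asymptotically central in $2$-norm; this holds since $E$ is $G$-conjugation equivariant, as $C$ is normal. Then $\mu_k$ for $x_k$ vanishes on $eC$, has mass at least $\delta'^2$, and is asymptotically invariant. A weak$^*$ cluster point of $\mu_k/\mu_k(G/C)$ is then a conjugation-invariant mean on $\ell^\infty((G/C)\setminus\{e\})$, a contradiction. The final sentence of the statement follows with $C=\{e\}$, since $\Cr(\{e\},\sigma)=\C$ is commutative.
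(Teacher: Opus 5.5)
Your proposal is correct and is essentially the paper's proof. It uses the same noncommuting pair of central sequences coming from $\Zst\to A_\infty\cap A'$, the same Fourier-coefficient functions pushed forward to $G/C$, and the same conditional expectation onto the commutative algebra $\Cr(C,\sigma)$. The only difference is the last step: the paper cites Effros's spectral-gap form of non-inner-amenability, $\kappa\lVert x-E(x)\rVert_2\leq\sum_i\lVert\lambda_\sigma(h_i)x\lambda_\sigma(h_i)^*-x\rVert_2$, whereas you first isolate the sequence that stays $2$-norm away from $\Cr(C,\sigma)$ and then obtain a conjugation-invariant mean on $G/C\setminus\{e\}$ directly as a weak$^*$ limit of the normalized, asymptotically invariant measures in $\ell^1$, which needs only the easy direction of that equivalence.
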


\begin{proof}
Put $A=\Cr(G,\sigma)$ and $Q=G/C$, and suppose that $A$ is $\Zst$-stable. Since $\Zst\simeq\Zst^{\otimes\infty}$, there is a unital $*$-homomorphism $\varphi\colon\Zst\to A_\infty\cap A'$. Let $\omega$ be a free ultrafilter on $\N$. Then $(x_j)_j\mapsto\lim_\omega\tau(x_j)$ is a tracial state on $A_\infty$. Its composition with $\varphi$ is the unique tracial state of $\Zst$, which is faithful. Choose $a,b\in\Zst$ with $ab\neq ba$. Lift $\varphi(a)$ and $\varphi(b)$ to bounded sequences $(a_j)$ and $(b_j)$ in $A$. Then $\lVert[a_j,x]\rVert\to0$ and $\lVert[b_j,x]\rVert\to0$ for all $x\in A$, but $\lim_\omega\lVert[a_j,b_j]\rVert_2>0$.

Let $E\colon A\to\Cr(C,\sigma)$ be the conditional expectation that keeps the Fourier coefficients on $C$. For $x\in A$ and $q\in Q$ put $\xi_x(q)=(\sum_{g\in q}\lvert\hat x(g)\rvert^2)^{1/2}$. Then $\lVert x-E(x)\rVert_2$ is the norm of the restriction of $\xi_x$ to $Q\setminus\{e\}$. The coefficient of $\lambda_\sigma(h)x\lambda_\sigma(h)^*$ at $hgh^{-1}$ has modulus $\lvert\hat x(g)\rvert$. Hence
\[
\lVert\lambda_\sigma(h)x\lambda_\sigma(h)^*-x\rVert_2\geq\lVert h\cdot\xi_x-\xi_x\rVert_2,
\]
where $(h\cdot\xi)(q)=\xi(h^{-1}qh)$. Since $Q$ is not inner amenable, there are $h_1,\dotsc,h_m\in G$ and $\kappa>0$ with $\sum_i\lVert h_i\cdot\eta-\eta\rVert_2\geq\kappa\lVert\eta\rVert_2$ for all $\eta\in\ell^2(Q\setminus\{e\})$, see \cite{Ef75}. Hence $\kappa\lVert x-E(x)\rVert_2\leq\sum_i\lVert\lambda_\sigma(h_i)x\lambda_\sigma(h_i)^*-x\rVert_2$ for all $x\in A$. So $\lVert a_j-E(a_j)\rVert_2\to0$ and $\lVert b_j-E(b_j)\rVert_2\to0$. Since $\Cr(C,\sigma)$ is commutative and $E$ is contractive, $\lVert[a_j,b_j]\rVert_2\to0$. This is a contradiction.
\end{proof}

Proposition~\ref{p:inner} is a $C^*$-counterpart of the fact that $W^*(G,\sigma)$ is McDuff when $\Cr(G,\sigma)$ is $\Zst$-stable with a unique tracial state. For instance $\Cr(G,\sigma)$ is not $\Zst$-stable when $G$ is acylindrically hyperbolic with trivial finite radical, since such groups are not inner amenable \cite{DGO}. This includes free groups of rank at least two. In particular $\Cr(\F_\infty)$ is not $\Zst$-stable.

\section{Braid groups}\label{sec:braids}

We use standard facts about braid groups \cite{KT08}. Let $\Delta_n=s_1(s_2s_1)\dotsm(s_{n-1}\dotsm s_2s_1)$. For $n\geq3$ the center of $B_n$ and of $P_n$ is generated by
\[
z_n=\Delta_n^2=(s_1\dotsm s_{n-1})^n=a_{1,2}(a_{1,3}a_{2,3})\dotsm(a_{1,n}a_{2,n}\dotsm a_{n-1,n}),
\]
see \cite[(1.2)]{Om20}. The abelianization of $P_n$ is free abelian on the classes of the $a_{i,j}$. So the class of $z_n$ has coefficient one at each of them.

\begin{lemma}\label{l:center}
Let $n\geq3$. Then $Z(B_n)\cap B_{n-1}=\{e\}$.
\end{lemma}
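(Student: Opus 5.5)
The center $Z(B_n)$ is infinite cyclic, generated by $z_n$. So it suffices to show that no nontrivial power $z_n^k$ with $k\neq0$ lies in $B_{n-1}$. I will detect this through the image in $S_n$ and the abelianization of $P_n$.

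First, $z_n^k$ lies in $P_n$, since $z_n$ is a product of the $a_{i,j}$. Suppose $z_n^k\in B_{n-1}$. Then $z_n^k\in B_{n-1}\cap P_n=P_{n-1}$. The last equality holds because the permutation of an element of $B_{n-1}$ fixes $n$, and on the first $n-1$ strands it is the permutation in $S_{n-1}$. So the element is pure in $B_{n-1}$ exactly when it is pure in $B_n$.

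Next, pass to the abelianization $P_n^{ab}\simeq\Z^{\binom n2}$, which is free on the classes $[a_{i,j}]$. The group $P_{n-1}$ is generated by the $a_{i,j}$ with $j\leq n-1$. Hence its image in $P_n^{ab}$ lies in the subgroup spanned by the $[a_{i,j}]$ with $j\le n-1$. In particular its coefficient at $[a_{1,n}]$ is zero. On the other hand, by the formula recalled above, the class of $z_n$ has coefficient one at each $[a_{i,j}]$. So $z_n^k$ has coefficient $k$ at $[a_{1,n}]$. This forces $k=0$, hence $z_n^k=e$.

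There is no real obstacle here. The only points to state carefully are that $Z(B_n)=\langle z_n\rangle$ for $n\geq3$ (quoted above from \cite{KT08}) and that $P_{n-1}$ is exactly the subgroup of $P_n$ generated by the $a_{i,j}$ with $j\leq n-1$, which follows from the definition of the generators. Alternatively, one can use the homomorphism $P_n\to\Z$ given by the coefficient at $[a_{1,n}]$ directly. It kills $P_{n-1}$ and sends $z_n$ to $1$, so the map $k\mapsto z_n^k$ meets $P_{n-1}$ only at $k=0$.
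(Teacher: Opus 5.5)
Your proposal is correct and follows the paper's proof essentially step for step. Both arguments reduce to $z_n^k\in P_n\cap B_{n-1}=P_{n-1}$, and then compare coefficients at $[a_{1,n}]$ in the free abelian group $P_n^{ab}$: this coefficient is $k$ for $z_n^k$ and $0$ on $P_{n-1}$, so $k=0$.
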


\begin{proof}
Suppose $z_n^l\in B_{n-1}$. Then $z_n^l\in P_n\cap B_{n-1}=P_{n-1}$. So its class has coefficient zero at $a_{1,n}$. But this coefficient is $l$, so $l=0$.
\end{proof}

\begin{lemma}\label{l:tails}
Let $k\geq1$. Then $T_k=\langle s_{k+1}^2,s_{k+2}^2\rangle\leq P_\infty$ is free of rank two, and it commutes with $B_k$.
\end{lemma}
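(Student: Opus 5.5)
The statement has three parts: $T_k\leq P_\infty$, $T_k$ commutes with $B_k$, and $T_k$ is free of rank two. I will check them in that order.

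\emph{Membership in $P_\infty$.} Each $s_i^2$ maps to the square of a transposition, which is trivial in $S_\infty$. So $s_{k+1}^2,s_{k+2}^2\in P_\infty$.

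\emph{Commutation with $B_k$.} The group $B_k$ is generated by $s_1,\dotsc,s_{k-1}$. For $i\leq k-1$ and $j\in\{k+1,k+2\}$ we have $\lvert i-j\rvert\geq2$, so $s_i$ commutes with $s_j$ by the defining relations, and hence with $s_j^2$.

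\emph{Freeness.} This is the only part with content. Conjugating by the shift $s_i\mapsto s_{i+k}$, which is an injective endomorphism of $B_\infty$ carrying $B_3$ isomorphically onto $\langle s_{k+1},s_{k+2}\rangle$, it suffices to show that $\langle s_1^2,s_2^2\rangle\leq B_3$ is free of rank two. Injectivity of the shift follows, for instance, from Artin's faithful representation, or from the fact that $\langle s_{k+1},s_{k+2}\rangle$ is a standard parabolic subgroup isomorphic to $B_3$. Note that $s_1^2=a_{1,2}$ and $s_2^2=a_{2,3}$.

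I see two ways to prove freeness in $B_3$.

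\emph{First route (centre quotient).} Pass to $B_3/Z(B_3)\simeq\mathrm{PSL}_2(\Z)$ via the standard map $s_1\mapsto\begin{pmatrix}1&1\\0&1\end{pmatrix}$ and $s_2\mapsto\begin{pmatrix}1&0\\-1&1\end{pmatrix}$. Then $s_1^2$ and $s_2^2$ map to $\begin{pmatrix}1&2\\0&1\end{pmatrix}$ and $\begin{pmatrix}1&0\\-2&1\end{pmatrix}$. These generate a free group of rank two by the classical ping-pong lemma (Sanov's theorem), using the sets $\{\lvert x\rvert>\lvert y\rvert\}$ and $\{\lvert x\rvert<\lvert y\rvert\}$ in $\Z^2$. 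A group that maps onto $\F_2$, with the generators going to free generators, is itself free on those generators. So $\langle s_1^2,s_2^2\rangle\simeq\F_2$.

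\emph{Second route (combing).} Use the splitting $P_3=\F_2\rtimes P_2$ with free factor $\langle a_{1,3},a_{2,3}\rangle$, together with $a_{1,2}$ acting. This is messier, so I would use the matrix route.

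The main obstacle is only the care needed to get the signs of the matrices right, and to confirm that the ping-pong sets are preserved by all nonzero powers. Both are routine: the $\pm2$ entries are exactly what make the ping-pong work.
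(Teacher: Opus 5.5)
Your proof is correct and follows the paper's argument: commutation with $B_k$ from the far-commutativity relations, transport to $B_3$, and freeness of the images of $s_1^2,s_2^2$ in $B_3/Z(B_3)\simeq\mathrm{PSL}_2(\Z)$, where the paper uses the same two matrices. The only difference is that the paper transports via conjugation by $(s_1\dotsm s_{k+2})^k$, an inner automorphism of $B_\infty$ that agrees with your shift on $B_3$, so the injectivity you justify separately through Artin's representation or parabolic subgroups is automatic there.
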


\begin{proof}
The generators $s_{k+1}$ and $s_{k+2}$ commute with $s_1,\dotsc,s_{k-1}$. Conjugation by $(s_1\dotsm s_{k+2})^k$ maps $s_1$ and $s_2$ to $s_{k+1}$ and $s_{k+2}$ \cite{KT08}. In $B_3/Z(B_3)\simeq\mathrm{PSL}_2(\Z)$ the elements $s_1^2$ and $s_2^2$ map to the classes of
\[
\begin{pmatrix}1&2\\0&1\end{pmatrix}
\quad\text{and}\quad
\begin{pmatrix}1&0\\-2&1\end{pmatrix}.
\]
These generate a free group of rank two. Hence $T_k$ is free on $s_{k+1}^2$ and $s_{k+2}^2$.
\end{proof}

\begin{corollary}\label{c:braids}
Let $P_\infty'=[P_\infty,P_\infty]$. Then $\Cr(P_\infty')\subseteq\Cr(B_\infty)$ is $\Zst$-stable. Hence every intermediate inclusion and every intermediate $C^*$-algebra is $\Zst$-stable.
\end{corollary}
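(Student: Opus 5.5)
We apply Theorem~\ref{t:criterion} with $H=P_\infty'$ and $G=B_\infty$. It suffices to show that for every finite set $E\subseteq B_\infty$ the group $P_\infty'\cap C_{B_\infty}(E)$ contains a copy of $\F_2$. The conclusions about intermediate inclusions and intermediate $C^*$-algebras then follow directly from that theorem.

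Let $E\subseteq B_\infty$ be finite. Since $B_\infty=\bigcup_kB_k$, we have $E\subseteq B_k$ for some $k$. By Lemma~\ref{l:tails}, $T_k=\langle s_{k+1}^2,s_{k+2}^2\rangle$ is free of rank two, lies in $P_\infty$, and commutes with $B_k$, hence with $E$. The only remaining point is that $T_k$ need not lie in $P_\infty'$; indeed $s_{k+1}^2=a_{k+1,k+2}$ has nonzero class in the abelianization. So we pass to the commutator subgroup $[T_k,T_k]$. It is contained in $[P_\infty,P_\infty]=P_\infty'$ and still commutes with $B_k$.

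It remains to find a copy of $\F_2$ inside $[T_k,T_k]$. The commutator subgroup of a free group of rank two is a free group of infinite rank, by Nielsen--Schreier, since it is a subgroup of infinite index. Concretely, with $x=s_{k+1}^2$ and $y=s_{k+2}^2$, the elements $[x,y]$ and $[x^2,y]$ (or $[x,y]$ and $[x,y^2]$) are part of a free basis of $[T_k,T_k]$, so they generate a free group of rank two. This gives $\F_2\simeq F\leq P_\infty'\cap C_{B_\infty}(E)$, and Theorem~\ref{t:criterion} applies.

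None of these steps is a real obstacle. The only point that needs care is that Lemma~\ref{l:tails} by itself produces free subgroups in $P_\infty$ rather than in $P_\infty'$. Passing to commutators inside the free group $T_k$ fixes this, because the commutator subgroup of $\F_2$ is free of infinite rank and in particular contains $\F_2$.
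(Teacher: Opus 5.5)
Your proposal is correct and matches the paper's proof: you put $E$ inside some $B_k$, take the free group $T_k$ of Lemma~\ref{l:tails}, pass to $[T_k,T_k]\leq P_\infty'$, which is free of infinite rank and so contains a copy of $\F_2$, and then apply Theorem~\ref{t:criterion}. Your explicit pair $[x,y],[x^2,y]$ is a harmless extra detail, since any two noncommuting elements of a free group already generate a copy of $\F_2$.
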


\begin{proof}
A finite subset of $B_\infty$ lies in some $B_k$. By Lemma~\ref{l:tails} the group $T_k\leq P_\infty$ commutes with $B_k$. The group $[T_k,T_k]\leq P_\infty'$ is free of infinite rank. So it contains a copy of $\F_2$. Now apply Theorem~\ref{t:criterion}.
\end{proof}

For instance $\Cr([B_\infty,B_\infty])$ is $\Zst$-stable, although $[B_\infty,B_\infty]$ does not contain $P_\infty$.

\section{Free products and selfless inclusions}\label{sec:selfless}

A $C^*$-probability space $(A,\rho)$ is a unital $C^*$-algebra $A$ with a state $\rho$ whose GNS representation is faithful. Group $C^*$-algebras carry their canonical trace $\tau$. Let $\omega$ be a free ultrafilter on $\N$ and let $A^\omega$ be the norm ultrapower of $A$ with the state $\rho^\omega$. An inclusion $D\subseteq(A,\rho)$ is \emph{selfless} if there are a $C^*$-probability space $(C,\kappa)$ with $C\neq\C$ and a state-preserving $*$-homomorphism
\[
(A,\rho)*(C,\kappa)\to(A^\omega,\rho^\omega)
\]
that is the diagonal embedding on $A$ and maps $C$ into $D^\omega$ \cite[Definition~2.3]{HKEPR}. Here $*$ is the reduced free product. The algebra $(A,\rho)$ is \emph{selfless} if $A\subseteq(A,\rho)$ is a selfless inclusion \cite{Ro25}. Every intermediate inclusion and every intermediate $C^*$-algebra of a selfless inclusion is selfless \cite[Theorem~2.5(i)]{HKEPR}.

We recall some facts about mapping class groups \cite{FM}. Let $S$ be a sphere with finitely many punctures. The \emph{mapping class group} $\Mod(S)$ is the group of isotopy classes of orientation-preserving homeomorphisms of $S$. These homeomorphisms may permute the punctures. A simple closed curve in $S$ is \emph{essential} if each side of it contains at least two punctures. An element of $\Mod(S)$ is \emph{pseudo-Anosov} if it has infinite order and no nonzero power of it fixes an essential curve up to isotopy. By the classification of mapping classes this agrees with the usual definition \cite[Chapter~13]{FM}.

The \emph{curve graph} of $S$ has the isotopy classes of essential simple closed curves as vertices. Two vertices are adjacent if the curves can be made disjoint. If $S$ has at least five punctures, then the curve graph is connected and hyperbolic, and $\Mod(S)$ acts on it by isometries \cite{MM99}. A group of isometries is \emph{elliptic} if its orbits are bounded. An isometry $f$ is \emph{loxodromic} if $k\mapsto f^kx$ is a quasi-isometric embedding of $\Z$. Then $f$ has two fixed points on the boundary, and they are joined by an $f$-invariant quasi-geodesic, called a quasi-axis. An element of $\Mod(S)$ is loxodromic if and only if it is pseudo-Anosov \cite{MM99}. Moreover the action is \emph{acylindrical} \cite{Bow08}. That is, for every $\varepsilon>0$ there are $R$ and $N$ such that for any two points at distance at least $R$, at most $N$ elements move both points at most $\varepsilon$. A group is \emph{acylindrically hyperbolic} if it is not virtually cyclic and has an acylindrical action on a hyperbolic space with a loxodromic element.

The braid group $B_m$ is the mapping class group of the closed disk with $m$ punctures $p_1,\dotsc,p_m$, where homeomorphisms and isotopies fix the boundary pointwise \cite[Chapter~9]{FM}. Here $s_i$ is the half-twist that exchanges $p_i$ and $p_{i+1}$. Let $m\geq3$ and let $S$ be the sphere with punctures $p_1,\dotsc,p_m,p_\infty$. Capping the boundary with a disk with puncture $p_\infty$ gives a homomorphism $q_m\colon B_m\to\Mod(S)$. Its kernel is $Z(B_m)$, and its image is the stabilizer of $p_\infty$ \cite[Chapters~3 and~9]{FM}. We call $\beta\in B_m$ \emph{pseudo-Anosov} if $q_m(\beta)$ is pseudo-Anosov.

\begin{proposition}\label{p:free}
Let $n\geq3$ and let $\beta\in B_{n+1}$ be pseudo-Anosov. Then $\langle B_n,\beta^N\rangle=B_n*\langle\beta^N\rangle$ for all large $N$. Such $\beta$ exist in $\F_n=\langle a_{1,2},\dotsc,a_{1,n+1}\rangle$.
\end{proposition}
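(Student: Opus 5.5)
We push everything into $\Mod(S)$, where $S$ is the sphere with punctures $p_1,\dotsc,p_{n+1},p_\infty$, via $q_{n+1}\colon B_{n+1}\to\Mod(S)$. Since $n+1\geq4$, $S$ has at least five punctures, so the curve graph $\mathcal{C}(S)$ is connected and hyperbolic. The action of $\Mod(S)$ on it is acylindrical \cite{Bow08}, and $f=q_{n+1}(\beta)$ is loxodromic \cite{MM99}.

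\textbf{Elliptic subgroup.} The subgroup $H=q_{n+1}(B_n)$ fixes the curve $c$ enclosing $p_1,\dotsc,p_n$ and separating them from $p_{n+1},p_\infty$. This curve is essential since each side contains at least two punctures. Hence $H$ is elliptic.

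\textbf{Ping-pong.} We then invoke \cite[Proposition~2.1]{AD19}. As we recall it, this says: if a group acts acylindrically on a hyperbolic space, $H$ is an elliptic subgroup, and $f$ is loxodromic with $\langle f\rangle\cap H=\{e\}$ (in the form needed, the fixed points of $f$ at infinity are not in the limit set of $H$, which is automatic for bounded orbits), then $\langle H,f^N\rangle\simeq H*\langle f^N\rangle$ for all large $N$.

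Concretely, the ping-pong runs as follows. Pick a point $x$ near the orbit $Hc$ and a quasi-axis of $f$. The orbit $Hx$ is bounded. Acylindricity bounds the number of $h\in H$ that move a long segment of the axis only slightly, so the projections of $h\cdot(\text{axis})$ to the axis have bounded diameter, except for finitely many $h$ that must nearly fix the axis. Any such $h$ would give a finite-index overlap with the elementary closure $E(f)$. That case has to be excluded: $H\cap E(f)$ should be finite, and in fact trivial after lifting. The main obstacle is this step, controlling $H\cap E(f)$ and checking the exact hypotheses of \cite{AD19}. I would argue as follows. Elements of $E(f)$ preserve the pair of laminations of $f$. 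Reducible elements of $H$ fix $c$, and an element fixing $c$ together with the laminations has finite order. $\Mod(S)$ has torsion, but its torsion elements in $H$ are images of roots of central elements in $B_n$. A finite-order element $h$ commensurating $\langle f\rangle$ would then be handled by replacing $f$ with a power, so that $f^N$ is outside the finite group $H\cap E(f)$ and avoids it.

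\textbf{Lifting to $B_{n+1}$.} Free products pulled back need care, because $q_{n+1}$ has kernel $Z(B_{n+1})$. By Lemma~\ref{l:center}, $q_{n+1}$ is injective on $B_n$. Take a nontrivial reduced word $w$ in $B_n$ and $\beta^N$. If $w$ involves $\beta^N$, its image in $\Mod(S)$ is nontrivial by the free product structure, so $w\neq e$. If $w\in B_n$ is nontrivial, then $w\neq e$ directly. Hence $\langle B_n,\beta^N\rangle=B_n*\langle\beta^N\rangle$.

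\textbf{Existence in $\F_n$.} The images of $\F_n$ are the point-pushing maps of $p_1$ in the sphere with the remaining punctures. By Kra's theorem, a point-pushing map along a filling loop is pseudo-Anosov (see \cite{FM}). I would take a loop based at $p_1$ that winds around all of $p_2,\dotsc,p_{n+1}$ and $p_\infty$ in a filling pattern. An explicit choice is a product such as $a_{1,2}a_{1,3}^{-1}a_{1,4}\dotsm$ with alternating signs, or a commutator-type word chosen so that the loop fills. Its push is then pseudo-Anosov by Kra's criterion.
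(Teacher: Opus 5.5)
Your outline follows the paper's proof: push forward by $q_{n+1}$, note that $H=q_{n+1}(B_n)$ is elliptic because it fixes the curve $c$ around $p_1,\dotsc,p_n$, apply \cite[Proposition~2.1]{AD19}, pull back with Lemma~\ref{l:center}, and use Kra's theorem for $\F_n$. There is a genuine gap at the crux, the hypotheses of \cite[Proposition~2.1]{AD19}. The paper checks two conditions:
\begin{itemize}
\item[(i)] $\Fix_{50\delta}(H)$ meets a quasi-axis of $f$ in a bounded set;
\item[(ii)] no $h\neq e$ in $H$ preserves a quasi-axis of $f$, i.e.\ $H\cap E(f)=\{e\}$.
\end{itemize}
The condition $\langle f\rangle\cap H=\{e\}$ that you quote is much weaker. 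It holds trivially here, since elements of $H$ are elliptic, and it is not enough. Your fallback for a nontrivial finite $H\cap E(f)$ fails: passing to powers does not help. Indeed $E(f^N)=E(f)$. If $e\neq h\in H\cap E(f)$, then $hf^kh^{-1}=f^{\pm k}$ for some $k\geq1$; take $\langle f^k\rangle$ to be the normal core of $\langle f\rangle$ in the virtually cyclic group $E(f)$. Then for every $N$ the nontrivial reduced word $hf^{kN}h^{-1}f^{\mp kN}$ of $H*\langle f^N\rangle$ becomes trivial in $\Mod(S)$. So $H\cap E(f)=\{e\}$ is necessary for the conclusion. ``Making $f^N$ avoid the finite group'' is not the issue, and an element of infinite order never lies in a finite group anyway.

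The missing idea is that $H$ is torsion-free. Suppose $q_{n+1}(b)^k=e$ with $b\in B_n$ and $k\geq1$. Then $b^k\in Z(B_{n+1})\cap B_n=\{e\}$ by Lemma~\ref{l:center}, so $b=e$ because $B_n$ is torsion-free. You use Lemma~\ref{l:center} only in the lifting step, and your remark about roots of central elements stops just short of this. Now let $h\in H\cap E(f)$. It fixes $c$, so it is not loxodromic. An infinite-order element of $E(f)$ has a nontrivial power in $\langle f\rangle$ and is therefore loxodromic. Hence $h$ has finite order, so $h=e$.

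Condition (i) should also be verified directly rather than through your projection sketch. Since $H$ is infinite, two points of $\Fix_{50\delta}(H)$ at distance at least $R$ would be moved at most $50\delta$ by infinitely many elements. For large $R$ this contradicts acylindricity.

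Finally, you do not show that your explicit words give filling loops, but no explicit word is needed. Filling loops exist because the sphere obtained by filling in $p_1$ has $n+1\geq4$ punctures.
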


\begin{proof}
Put $q=q_{n+1}$, $H=q(B_n)$, and $f=q(\beta)$. The map $q$ is injective on $B_n$ by Lemma~\ref{l:center}. Hence $H$ is infinite and torsion-free. The sphere $S$ has $n+2\geq5$ punctures. So $\Mod(S)$ acts acylindrically on its curve graph, and $f$ is loxodromic. The group $H$ fixes the curve around $p_1,\dotsc,p_n$. This curve is essential since both sides contain at least two punctures. Thus $H$ is elliptic.

We check the two conditions of \cite[Proposition~2.1]{AD19}. Let $\delta$ be a hyperbolicity constant. Let $\Fix_{50\delta}(H)$ be the set of points that every $h\in H$ moves at most $50\delta$. Suppose it contains two points at distance at least $R$. Then infinitely many elements of $H$ move both points at most $50\delta$. For large $R$ this contradicts acylindricity. Hence $\Fix_{50\delta}(H)$ is bounded, and it meets a quasi-axis of $f$ in a bounded set. Next let $E(f)$ be the stabilizer of the two fixed points of $f$ on the boundary. It is virtually cyclic \cite{DGO}. Thus $\langle f\rangle$ has finite index in $E(f)$, and the elements of infinite order in $E(f)$ are loxodromic. The elements of $H$ are elliptic and $H$ is torsion-free. Hence $H\cap E(f)=\{e\}$. Consequently no $h\neq e$ in $H$ preserves a quasi-axis of $f$, since such $h$ would lie in $E(f)$. Now \cite[Proposition~2.1]{AD19} gives $\langle H,f^N\rangle=H*\langle f^N\rangle$ for all large $N$. The natural map $B_n*\langle\beta^N\rangle\to B_{n+1}$ composed with $q$ is an isomorphism onto $H*\langle f^N\rangle$. So the natural map is injective.

Forgetting the first string maps the braids in $B_{n+1}$ that fix the first string onto the braid group of the other $n$ strings. Its kernel is $\F_n$ \cite[Corollary~1.23]{KT08}. Let $S'$ be the sphere obtained from $S$ by filling in $p_1$. Moving $p_1$ once along a loop in $S'$ gives a mapping class of $S$, called a point push \cite[Chapter~4]{FM}. Under $q$ the group $\F_n$ becomes the group of point pushes of $p_1$. A loop in $S'$ is \emph{filling} if it cannot be homotoped off any essential simple closed curve in $S'$. Since $S'$ has $n+1\geq4$ punctures, such loops exist. The point push of $p_1$ along a filling loop is pseudo-Anosov \cite[Chapter~14]{FM}.
\end{proof}

The next lemma is a relative form of the embedding in \cite[Theorem~1]{Oz}, in the easy case where $G$ itself contains the free products.

\begin{lemma}\label{l:free}
Let $H\leq G$ be countable groups. Suppose that for every finite set $E\subseteq G$ there is $g\in H$ of infinite order with $\langle E,g\rangle=\langle E\rangle*\langle g\rangle$. Then $\Cr(H)\subseteq\Cr(G)$ is a selfless inclusion.
\end{lemma}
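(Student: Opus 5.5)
Choose finite sets $E_1\subseteq E_2\subseteq\dotsb$ with union $G$, and for each $k$ pick $g_k\in H$ of infinite order with $\langle E_k,g_k\rangle=\langle E_k\rangle*\langle g_k\rangle$. Take $(C,\kappa)=(\Cr(\Z),\tau)=(C(\T),\text{Haar})$, which is not $\C$. Define $\Phi$ on the algebraic free product of $\C[G]$ and $\C[\Z]$ by sending $\lambda(g)$ to the constant sequence $(\lambda(g))_k$ and the generator $u$ of $\C[\Z]$ to $(\lambda(g_k))_k\in\Cr(H)^\omega$. The target of the image of $C$ is then in $D^\omega$ with $D=\Cr(H)$. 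The goal is to show that $\Phi$ is state-preserving. Once that holds, $\Phi$ extends to the reduced free product: both $\tau*\tau$ on $\Cr(G)*\Cr(\Z)$ and $\tau^\omega$ on the image are states with faithful GNS representations, so equality of states on the dense $*$-subalgebra gives an isometric map on GNS spaces. Concretely, I would invoke the standard fact that a state-preserving $*$-homomorphism from the algebraic free product into a $C^*$-probability space extends to the reduced free product when the reduced free product's GNS is faithful, as in \cite[Theorem~1]{Oz}.

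The key computation is the following. For a reduced word $x=\lambda(h_1)u^{m_1}\lambda(h_2)u^{m_2}\dotsm$, with $h_i\in G\setminus\{e\}$ interior and $m_i\neq0$, the free product state vanishes. I must show $\lim_\omega\tau(\lambda(h_1)\lambda(g_k)^{m_1}\dotsm)=0$. Since only finitely many $h_i$ occur, they all lie in $E_k$ for large $k$. Then $\langle E_k,g_k\rangle=\langle E_k\rangle*\langle g_k\rangle$, and $g_k$ has infinite order, so $g_k^{m_i}\neq e$. Hence the group element $h_1g_k^{m_1}h_2\dotsm$ is a reduced word in the free product and is not the identity. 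So $\tau$ of it is $0$ for all large $k$.

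By linearity, $\tau^\omega\circ\Phi$ and $\tau*\tau$ agree on the span of the reduced words together with the elements of $\C[G]$ and of $\C[\Z]$. This span is the whole algebraic free product. On $\C[G]$ the two states agree trivially. On $u^m$ with $m\neq0$ we use $\tau(\lambda(g_k^m))=0$, which holds because $g_k$ has infinite order.

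I expect the main obstacle to be the extension step from the algebraic free product to the reduced $C^*$-free product. One must argue that the GNS representation of $\tau^\omega$ restricted to the $C^*$-algebra generated by the image is unitarily equivalent to the reduced free product representation. Since $\tau^\omega$ is a trace on $\Cr(G)^\omega$ that is only faithful on the image up to this identification, I would argue via the GNS spaces. The GNS space of the reduced free product embeds isometrically into $L^2(\Cr(G)^\omega,\tau^\omega)$, and this embedding intertwines the two actions. This yields a contractive $*$-homomorphism that is injective by faithfulness of $\tau*\tau$, which is the content of \cite[Theorem~1]{Oz} in this easy case.
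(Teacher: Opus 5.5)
Your trace computation is correct, but the extension step has a genuine gap. The state $\tau^\omega$ is not faithful on the norm ultrapower $\Cr(G)^\omega$. For instance, take $g$ of infinite order and bumps $f_k$ at $2$ shrinking to a point. The class of $(f_k(\lambda(g)+\lambda(g)^*))_k$ is a nonzero positive element on which $\tau^\omega$ vanishes. So $(\Cr(G)^\omega,\tau^\omega)$ is not a $C^*$-probability space in the sense the fact you quote needs.

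Look at what your intertwiner $V$ actually gives. It satisfies $\pi_{\tau^\omega}(\Phi(x))V=V\pi_{\tau*\tau}(x)$, so it yields a contractive $*$-homomorphism from $C^*(\Phi(\cdot))$ \emph{onto} $\Cr(G)*C(\T)$ that sends $\Phi(x)$ to $x$. Its kernel is $\{y:\tau^\omega(y^*y)=0\}$. Hence it is injective exactly when $\tau^\omega$ is faithful on the image, and faithfulness of $\tau*\tau$ plays no role.

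Trace preservation alone really is insufficient. The inclusion $\C[\F_2]=\C[\Z]*\C[\Z]\to C^*(\F_2)$ preserves the canonical trace, but it does not extend to $\Cr(\F_2)=C(\T)*C(\T)$. Composing with the trivial character of $C^*(\F_2)$ would give a character of $\Cr(\F_2)$. In group terms, you only use that each fixed word $h_1g_k^{m_1}h_2\dotsm$ is eventually nontrivial. Such asymptotic injectivity gives no control of the operator norms of the representatives of $\Phi(x)$.

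The missing idea is a norm estimate at each finite stage. It uses the full hypothesis $\langle E_k,g_k\rangle=\langle E_k\rangle*\langle g_k\rangle$, not just nontriviality of words. With $G_k=\langle E_k\rangle$ there are isometric inclusions $\Cr(G_k)*C(\T)\simeq\Cr(G_k*\Z)\simeq\Cr(\langle E_k,g_k\rangle)\subseteq\Cr(G)$. They give trace-preserving embeddings $\Psi_k$ that fix $\Cr(G_k)$ and send $u$ to $\lambda(g_k)$. Equivalently, your GNS argument does work at level $k$: there the trace identity holds exactly, not just in the limit, and $\tau$ is faithful on $\Cr(G)$.

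The algebras $\Cr(G_k)*C(\T)$ increase with dense union in $\Cr(G)*C(\T)$. So every $x$ in your algebraic free product satisfies $\lVert\Phi(x)\rVert=\lim_\omega\lVert\Psi_k(x)\rVert=\lVert x\rVert$. Thus $\Phi$ is isometric and extends to the reduced free product. This is the paper's argument, and it needs no appeal to \cite[Theorem~1]{Oz}.
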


\begin{proof}
Choose finite sets $E_k$ that increase to $G$ and elements $g_k\in H$ as above. Put $G_k=\langle E_k\rangle$. An alternating product of nontrivial elements of $G_k$ and $\langle g_k\rangle$ is nontrivial. So $\Cr(G_k)$ and $\Cr(\langle g_k\rangle)$ are free with respect to $\tau$. Let $C(\T)$ carry the Haar state, that is, integration against normalized Lebesgue measure. Let $u\in C(\T)$ be the identity function. Since $g_k$ has infinite order, there is a trace-preserving isomorphism $C(\T)\to\Cr(\langle g_k\rangle)$ that sends $u$ to $\lambda(g_k)$. This gives trace-preserving embeddings
\[
\Psi_k\colon\Cr(G_k)*C(\T)\to\Cr(G)
\]
that fix $\Cr(G_k)$ and send $u$ to $\lambda(g_k)$. The algebras $\Cr(G_k)*C(\T)$ increase and their union is dense in $\Cr(G)*C(\T)$. Each $x$ in the union lies in the domain of $\Psi_k$ for all large $k$. Let $\Psi(x)$ be the class of $(\Psi_k(x))_k$ in $\Cr(G)^\omega$, with arbitrary entries for small $k$. Then $\Psi$ is a trace-preserving $*$-homomorphism. It is isometric since $\lVert\Psi_k(x)\rVert=\lVert x\rVert$ for large $k$. Hence $\Psi$ extends to $\Cr(G)*C(\T)$. It is the diagonal embedding on $\Cr(G)$, and $\Psi(u)\in\Cr(H)^\omega$.
\end{proof}

\begin{proof}[Proof of Theorem~\ref{t:B}]
Let $E\subseteq B_\infty$ be finite. Choose $n\geq3$ with $E\subseteq B_n$. Proposition~\ref{p:free} gives $g\in\F_n$ with $\langle B_n,g\rangle=B_n*\langle g\rangle$. Then $\langle E,g\rangle=\langle E\rangle*\langle g\rangle$ and $g\in\F_\infty$. Therefore Lemma~\ref{l:free} applies.
\end{proof}

\begin{proof}[Proof of Theorem~\ref{t:A}]
Every intermediate inclusion of $\Cr(P_\infty)\subseteq\Cr(B_\infty)$ is also an intermediate inclusion of $\Cr(P_\infty')\subseteq\Cr(B_\infty)$ and of $\Cr(\F_\infty)\subseteq\Cr(B_\infty)$. So it is $\Zst$-stable by Corollary~\ref{c:braids} and selfless by Theorem~\ref{t:B}. The last statement follows from \cite[Theorem~3.1]{Ro25}.
\end{proof}

\begin{remark}\label{r:pams}
The selflessness in Theorem~\ref{t:A} also follows from the $\Zst$-stability. Each $P_n$ is an iterated semidirect product of free groups \cite[Section~1]{Om20}. So $B_n$ and $B_\infty$ are exact \cite{BO08}. Each intermediate $C^*$-algebra is some $\Cr(\Gamma)$, and it is simple \cite[Example~7.4]{BO23}. So it has a unique tracial state \cite{BKKO}. It is also exact and $\Zst$-stable. Hence it is selfless by \cite[Theorem~3]{Oz}. For the inclusion, note that the triple $(B_\infty,P_\infty,1)$ satisfies the relative Kleppner condition \cite[Section~6]{Om20}. Moreover $P_\infty$ is $C^*$-simple, so $\Cr(P_\infty)$ has a unique tracial state \cite{BKKO}. Hence $\Cr(P_\infty)\subseteq\Cr(B_\infty)$ has the relative Dixmier property \cite[Theorem~6.2]{BO23}. That is, $\tau(a)1$ lies in the closed convex hull of $\{uau^*:u\in U(\Cr(P_\infty))\}$ for every $a\in\Cr(B_\infty)$. Now \cite[Theorem~2.9]{HKEPR} shows that the $\Zst$-stable inclusion $\Cr(P_\infty)\subseteq\Cr(B_\infty)$ is selfless. A similar route gives selfless inclusions for amenable groups in \cite{Om26}. Here the $\Zst$-stability comes from Theorem~\ref{t:criterion} instead.
\end{remark}

\begin{remark}\label{r:irred}
The subgroup $\F_\infty$ is normal in $P_\infty$, but not in $B_\infty$. For instance $s_1a_{1,3}s_1^{-1}=a_{2,3}\notin\F_\infty$. Nevertheless $\Cr(\F_\infty)\subseteq\Cr(B_\infty)$ is $C^*$-irreducible. Indeed every intermediate $C^*$-algebra is selfless by Theorem~\ref{t:B}, hence simple \cite[Theorem~3.1]{Ro25}. Moreover the inclusion has the relative Dixmier property \cite[Theorem~2.5(iv)]{HKEPR}. This complements \cite[Example~7.4]{BO23}, where the subgroup $P_\infty$ is normal.
\end{remark}

\begin{remark}\label{r:php}
Selflessness of $\Cr(B_\infty)$ alone also follows from Ozawa's property $\mathrm{P}_{\mathrm{PHP}}$ \cite[Section~8]{Oz}. Let $F\subseteq B_\infty$ be finite and choose $n\geq3$ with $F\subseteq B_n$. The group $Q=q_{n+1}(B_{n+1})$ has finite index in the mapping class group of a sphere with $n+2\geq5$ punctures. So it is acylindrically hyperbolic. Its finite radical is trivial, since $Q\simeq B_{n+1}/Z(B_{n+1})$ is $C^*$-simple \cite[Remark~2.1]{Om20}. Hence $Q$ satisfies the hypothesis of \cite[Proposition~15]{Oz} by \cite[Proposition~0.3]{AD19}, as noted in \cite[Section~8]{Oz}. Let $B_{n+1}$ act through $q_{n+1}$. By Lemma~\ref{l:center} every element of $B_n\setminus\{e\}$ acts nontrivially. So the proof of \cite[Proposition~15]{Oz} gives the elements $t_i$ and the sets $C_i\subseteq D_i$ required in $\mathrm{P}_{\mathrm{PHP}}$ for $F$, with $B_{n+1}$ in place of $B_\infty$. Let $R$ be a set of representatives for the right cosets of $B_{n+1}$ in $B_\infty$. Then the same $t_i$ and the sets $C_iR\subseteq D_iR$ satisfy $\mathrm{P}_{\mathrm{PHP}}$ for $F$ in $B_\infty$. Hence $B_\infty$ has property $\mathrm{P}_{\mathrm{PHP}}$, and $\Cr(B_\infty)$ is completely selfless \cite[Theorem~14]{Oz}. This does not directly give the inclusions in Theorems~\ref{t:A} and~\ref{t:B}. For them the elements $t_i$ would have to lie in $P_\infty$ or in $\F_\infty$, compare \cite[Section~3]{HKEPR}.
\end{remark}

\section{Twisted group \texorpdfstring{$C^*$}{C*}-algebras}\label{sec:twists}

Let $\sigma\colon G\times G\to\T$ be a normalized $2$-cocycle. Let $\lambda_\sigma$ be the left regular $\sigma$-projective representation of $G$ with $\lambda_\sigma(g)\lambda_\sigma(h)=\sigma(g,h)\lambda_\sigma(gh)$. We also write $\sigma$ for its restrictions to subgroups.

\begin{lemma}\label{l:scalars}
Let $h\in G$. For $t\in C_G(h)$ put $\chi_h(t)=\sigma(t,h)\overline{\sigma(h,t)}$. Then $\chi_h$ is a character on $C_G(h)$. Hence $\lambda_\sigma(t)$ commutes with $\lambda_\sigma(h)$ for all $t\in[C_G(h),C_G(h)]$.
\end{lemma}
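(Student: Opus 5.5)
The plan is to express the commutator of $\lambda_\sigma(t)$ and $\lambda_\sigma(h)$ through $\chi_h(t)$, and then read off multiplicativity from associativity of $\lambda_\sigma$.

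For $t\in C_G(h)$ we have $th=ht$. The defining relation gives
\[
\lambda_\sigma(t)\lambda_\sigma(h)=\sigma(t,h)\lambda_\sigma(th)=\sigma(t,h)\lambda_\sigma(ht)=\sigma(t,h)\overline{\sigma(h,t)}\lambda_\sigma(h)\lambda_\sigma(t).
\]
Hence
\[
\lambda_\sigma(t)\lambda_\sigma(h)\lambda_\sigma(t)^*=\chi_h(t)\lambda_\sigma(h),
\]
using that $\lambda_\sigma(t)$ is unitary. This identity characterizes $\chi_h(t)$ as the scalar by which $\operatorname{Ad}\lambda_\sigma(t)$ rescales $\lambda_\sigma(h)$.

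To show that $\chi_h$ is a character, take $s,t\in C_G(h)$; note that $st\in C_G(h)$. Since $\lambda_\sigma(s)\lambda_\sigma(t)=\sigma(s,t)\lambda_\sigma(st)$ and $\sigma(s,t)$ is a scalar of modulus one, conjugation by $\lambda_\sigma(st)$ equals conjugation by $\lambda_\sigma(s)\lambda_\sigma(t)$. Applying the displayed identity twice gives
\[
\chi_h(st)\lambda_\sigma(h)=\chi_h(s)\chi_h(t)\lambda_\sigma(h).
\]
Since $\lambda_\sigma(h)$ is unitary, hence nonzero, we get $\chi_h(st)=\chi_h(s)\chi_h(t)$. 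The values lie in $\T$, so $\chi_h\colon C_G(h)\to\T$ is a homomorphism. This gives the first claim without any direct cocycle identity manipulation; one could instead verify it from the cocycle identity, but the representation argument avoids that bookkeeping.

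For the second claim, $\T$ is abelian, so $\chi_h$ vanishes on commutators: it is trivial on $[C_G(h),C_G(h)]$. For $t$ in this subgroup the displayed identity therefore reads $\lambda_\sigma(t)\lambda_\sigma(h)\lambda_\sigma(t)^*=\lambda_\sigma(h)$, which is the required commutation.

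The only step needing care is the reduction of $\operatorname{Ad}\lambda_\sigma(st)$ to $\operatorname{Ad}\lambda_\sigma(s)\circ\operatorname{Ad}\lambda_\sigma(t)$. It holds because the phase $\sigma(s,t)$ cancels under conjugation, and normalization of $\sigma$ is not needed for it.
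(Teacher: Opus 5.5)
Your proposal is correct and follows the paper's proof: both derive $\lambda_\sigma(t)\lambda_\sigma(h)=\chi_h(t)\lambda_\sigma(h)\lambda_\sigma(t)$ from $th=ht$, then get multiplicativity by applying this to $s$, $t$ and $st$ and using $\lambda_\sigma(s)\lambda_\sigma(t)=\sigma(s,t)\lambda_\sigma(st)$. You also spell out that the phase $\sigma(s,t)$ cancels under conjugation and that $\chi_h$ is trivial on commutators because $\T$ is abelian.
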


\begin{proof}
For $t\in C_G(h)$ we have $\lambda_\sigma(t)\lambda_\sigma(h)=\chi_h(t)\lambda_\sigma(h)\lambda_\sigma(t)$. Applying this to $t_1$, $t_2$, and $t_1t_2$ gives $\chi_h(t_1t_2)=\chi_h(t_1)\chi_h(t_2)$.
\end{proof}

\begin{theorem}\label{t:twisted}
Let $H\leq G$ be countable groups and let $\sigma$ be a $2$-cocycle on $G$.
\begin{itemize}
\item[(a)] Under the hypothesis of Theorem~\ref{t:criterion} there is a unital embedding
\[
\Zst\to\Cr(H,\sigma)_\infty\cap\Cr(G,\sigma)'.
\]
So $\Cr(H,\sigma)\subseteq\Cr(G,\sigma)$ and all its intermediate inclusions and $C^*$-algebras are $\Zst$-stable.
\item[(b)] Under the hypothesis of Lemma~\ref{l:free} the inclusion $\Cr(H,\sigma)\subseteq\Cr(G,\sigma)$ is selfless.
\end{itemize}
\end{theorem}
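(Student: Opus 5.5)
For part (a) I will follow the proof of Theorem~\ref{t:criterion}, but replace the free groups $F_k$ by commutator subgroups of them. Choose finite sets $E_1\subseteq E_2\subseteq\dotsb$ with union $G$, and copies $F_k\simeq\F_2$ in $H\cap C_G(E_k)$. Every $t\in F_k$ commutes with every $h\in E_k$ in $G$. By Lemma~\ref{l:scalars}, $\lambda_\sigma(t)$ commutes with $\lambda_\sigma(h)$ whenever $t\in[C_G(h),C_G(h)]$, and this group contains $[F_k,F_k]$. Put $F_k'=[F_k,F_k]$. It is free of infinite rank, so it contains a copy $L_k\simeq\F_2$. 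Then $\lambda_\sigma(L_k)$ commutes with $\lambda_\sigma(h)$ for all $h\in E_k$.

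Next I need a unital embedding $\Zst\to\Cr(L_k,\sigma)$. Since $H^2(\F_2,\T)=0$, the restriction of $\sigma$ to $L_k$ is a coboundary $\sigma(s,t)=\mu(s)\mu(t)\overline{\mu(st)}$. Rescaling $\lambda_\sigma(t)$ by $\overline{\mu(t)}$ gives a genuine unitary representation of $L_k$ that is unitarily equivalent to the regular one. This yields a trace-preserving isomorphism $\Cr(L_k)\simeq\Cr(L_k,\sigma)\subseteq\Cr(H,\sigma)$. Composing with \cite[Proposition~4.2 and Remark~4.3]{ThW14} gives unital embeddings $\varphi_k\colon\Zst\to\Cr(H,\sigma)$. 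The rescaling only multiplies by scalars, so the image of $\varphi_k$ still commutes with $\lambda_\sigma(h)$ for $h\in E_k$.

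The elements $\lambda_\sigma(g)$ span a dense subspace of $\Cr(G,\sigma)$. As in Theorem~\ref{t:criterion}, we get $\lVert[\varphi_k(c),a]\rVert\to0$ for all $c\in\Zst$ and $a\in\Cr(G,\sigma)$. So the $\varphi_k$ induce a unital $*$-homomorphism $\Zst\to\Cr(H,\sigma)_\infty\cap\Cr(G,\sigma)'$, which is injective because $\Zst$ is simple. The stated consequences then follow from \cite[Theorem~4.4]{Sa25} exactly as at the start of Section~\ref{sec:criterion}.

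For part (b) I will repeat the proof of Lemma~\ref{l:free} with twisted algebras, taking $G_k=\langle E_k\rangle$. By the free product structure, an alternating product of nontrivial elements of $G_k$ and $\langle g_k\rangle$ is nontrivial in $G$. Hence $\tau$ vanishes on alternating products of centered elements. So $\Cr(G_k,\sigma)$ and $\Cr(\langle g_k\rangle,\sigma)$ are free with respect to $\tau$, since the cocycle contributes only phases and Fourier supports multiply as in the group. The restriction of $\sigma$ to $\langle g_k\rangle\simeq\Z$ is a coboundary, because $H^2(\Z,\T)=0$. Therefore a scalar multiple $\mu\lambda_\sigma(g_k)$ is a Haar unitary generating $\Cr(\langle g_k\rangle,\sigma)\simeq C(\T)$. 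This gives trace-preserving embeddings $\Cr(G_k,\sigma)*C(\T)\to\Cr(G,\sigma)$. The ultrapower argument then goes through verbatim, with the image of $u$ lying in $\Cr(H,\sigma)^\omega$.

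The main obstacle is in (a): ensuring that the approximately central copies genuinely commute with $\lambda_\sigma(E_k)$. Commutation in $G$ only gives commutation up to the character $\chi_h$, and this is exactly why we pass to $[F_k,F_k]$. The step to check is that $F_k\leq C_G(h)$ implies $[F_k,F_k]\leq[C_G(h),C_G(h)]$, which is immediate. Triviality of $\sigma$ on free groups handles the rest.
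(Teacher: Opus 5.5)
Your proposal is correct and follows the paper's proof essentially step by step. In (a) you pass to a copy of $\F_2$ inside $[F_k,F_k]$, so that Lemma~\ref{l:scalars} gives exact commutation, and you use that $\sigma$ is a coboundary on free groups. In (b) you check freeness from the fact that $\sigma$ only contributes phases, then rerun Lemma~\ref{l:free}.

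The only cosmetic difference is in (b). The paper notes directly that $\lambda_\sigma(g_k)^n$ is a scalar multiple of $\lambda_\sigma(g_k^n)$, so $\lambda_\sigma(g_k)$ is already a Haar unitary. You instead rescale it using $H^2(\Z,\T)=0$.
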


\begin{proof}
(a) Let $E_k$ and $F_k$ be as in the proof of Theorem~\ref{t:criterion}. Choose $K_k\leq[F_k,F_k]$ with $K_k\simeq\F_2$. By Lemma~\ref{l:scalars} the unitaries $\lambda_\sigma(t)$ with $t\in K_k$ commute with $\lambda_\sigma(g)$ for all $g\in E_k$. Every $2$-cocycle on a free group is a coboundary. So $\Cr(K_k,\sigma)\simeq\Cr(\F_2)$, and the proof of Theorem~\ref{t:criterion} applies to the groups $K_k$.

(b) Let $G_k$ and $g_k$ be as in the proof of Lemma~\ref{l:free}. Let $x_1,\dotsc,x_r$ be nontrivial elements taken alternately from $G_k$ and $\langle g_k\rangle$. Then $\lambda_\sigma(x_1)\dotsm\lambda_\sigma(x_r)$ is a scalar multiple of $\lambda_\sigma(x_1\dotsm x_r)$, and $x_1\dotsm x_r\neq e$. So its trace is zero. Hence $\Cr(G_k,\sigma)$ and $\Cr(\langle g_k\rangle,\sigma)$ are free with respect to $\tau$. Moreover $\lambda_\sigma(g_k)^n$ is a scalar multiple of $\lambda_\sigma(g_k^n)$, so its trace is zero for $n\neq0$. So $u\mapsto\lambda_\sigma(g_k)$ defines a trace-preserving isomorphism $C(\T)\to\Cr(\langle g_k\rangle,\sigma)$. Now the proof of Lemma~\ref{l:free} applies.
\end{proof}

\begin{corollary}\label{c:twisted}
Let $\Gamma\leq B_\infty$ and let $\sigma$ be a $2$-cocycle on $\Gamma$, which need not extend to $B_\infty$ (Example~\ref{ex:scalars}). If $P_\infty'\leq\Gamma$, then $\Cr(P_\infty',\sigma)\subseteq\Cr(\Gamma,\sigma)$ is $\Zst$-stable. If $\F_\infty\leq\Gamma$, then $\Cr(\F_\infty,\sigma)\subseteq\Cr(\Gamma,\sigma)$ is selfless. In both cases every intermediate inclusion and every intermediate $C^*$-algebra has the same property. In particular $\Cr(P_\infty,\sigma)\subseteq\Cr(\Gamma,\sigma)$ is $\Zst$-stable and selfless when $P_\infty\leq\Gamma$. Its intermediate $C^*$-algebras are then exactly the algebras $\Cr(\Gamma',\sigma)$ with $P_\infty\leq\Gamma'\leq\Gamma$.
\end{corollary}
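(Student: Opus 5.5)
The plan is to apply Theorem~\ref{t:twisted} with $G=\Gamma$ and the given cocycle $\sigma$ on $\Gamma$. The only group-theoretic inputs needed are the ones already checked in Section~\ref{sec:braids}; they are now relative to $\Gamma$ instead of $B_\infty$.

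For the $\Zst$-stable part, take $H=P_\infty'$ and $G=\Gamma$, where $P_\infty'\leq\Gamma$. Let $E\subseteq\Gamma$ be finite. Then $E\subseteq B_k$ for some $k$. By Lemma~\ref{l:tails}, $T_k\leq P_\infty$ commutes with $B_k$. Hence $[T_k,T_k]\leq P_\infty'\cap C_\Gamma(E)$, and this group is free of infinite rank, so it contains a copy of $\F_2$. Thus the hypothesis of Theorem~\ref{t:criterion} holds for $P_\infty'\leq\Gamma$, and Theorem~\ref{t:twisted}(a) gives the $\Zst$-stability of $\Cr(P_\infty',\sigma)\subseteq\Cr(\Gamma,\sigma)$ together with all intermediate inclusions and algebras. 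Only $\sigma$ on $\Gamma$ is used, because Lemma~\ref{l:scalars} is applied inside $\Gamma$. The one point to note is that we need $[T_k,T_k]$, and not merely $T_k$, to lie in $\Gamma$. This holds since $P_\infty'\leq\Gamma$.

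For the selfless part, take $H=\F_\infty\leq\Gamma$. Let $E\subseteq\Gamma$ be finite, choose $n\geq3$ with $E\subseteq B_n$, and use Proposition~\ref{p:free} to get $g\in\F_n\leq\F_\infty\leq\Gamma$ of infinite order with $\langle B_n,g\rangle=B_n*\langle g\rangle$. Then $\langle E,g\rangle=\langle E\rangle*\langle g\rangle$, which is a subgroup of $\Gamma$. So the hypothesis of Lemma~\ref{l:free} holds for $\F_\infty\leq\Gamma$, and Theorem~\ref{t:twisted}(b) gives selflessness. Intermediate inclusions and algebras inherit this by \cite[Theorem~2.5(i)]{HKEPR}.

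For the case $P_\infty\leq\Gamma$, both $P_\infty'$ and $\F_\infty$ lie in $P_\infty\leq\Gamma$. Every intermediate inclusion of $\Cr(P_\infty,\sigma)\subseteq\Cr(\Gamma,\sigma)$ is then an intermediate inclusion of both previous ones, so it is $\Zst$-stable and selfless.

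The last sentence, identifying the intermediate algebras, is the step I expect to be the main obstacle. I would use the twisted Galois correspondence for crossed products/twisted group algebras of a normal subgroup (the twisted version of \cite[Theorem~4.4]{CS19}). Its hypothesis is that the intermediate algebras are simple, or equivalently that the inclusion is $C^*$-irreducible. That simplicity comes from selflessness via \cite[Theorem~3.1]{Ro25}. Here $P_\infty$ is normal in $\Gamma$, and $\Cr(\Gamma,\sigma)$ can be written as a twisted crossed product of $\Cr(P_\infty,\sigma)$ by $\Gamma/P_\infty$, a subgroup of $S_\infty$. An intermediate algebra $A$ is then determined by the set $\Gamma'=\{g:\lambda_\sigma(g)\in A\}$. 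To show $A=\Cr(\Gamma',\sigma)$, the key point is that the conditional expectations onto coset pieces map $A$ into $A$. I would get this from the relative Dixmier property, which \cite[Theorem~2.5(iv)]{HKEPR} yields from selflessness. Averaging $uau^*$ over unitaries $u$ from $\Cr(P_\infty,\sigma)$ isolates the Fourier components of $a$ on each coset $gP_\infty$, because these cosets are outer for the action. Each such component lies in $\lambda_\sigma(g)\Cr(P_\infty,\sigma)$, and a nonzero component forces $\lambda_\sigma(g)\in A$ by simplicity.
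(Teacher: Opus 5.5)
Your proof is correct. The $\Zst$-stable part, the selfless part and the case $P_\infty\leq\Gamma$ follow the paper: rerun Corollary~\ref{c:braids} and Theorem~\ref{t:B} inside $\Gamma$ and feed them into Theorem~\ref{t:twisted}. Your observation that $[T_k,T_k]\leq P_\infty'\leq\Gamma$ is exactly what is needed, since $\sigma$ lives only on $\Gamma$.

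For the last sentence you take a different route. The paper checks the hypotheses of \cite[Theorem~6.2]{BO23} directly. It gets simplicity of $\Cr(P_\infty,\sigma)$ from \cite[Corollary~4.5]{BK}, since $P_\infty$ is $C^*$-simple. It gets the relative Kleppner condition because nontrivial braids have infinite $P_\infty$-conjugacy classes.

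You instead reuse the selflessness just proved. Every intermediate algebra of $\Cr(P_\infty,\sigma)\subseteq\Cr(\Gamma,\sigma)$ is an intermediate algebra of $\Cr(\F_\infty,\sigma)\subseteq\Cr(\Gamma,\sigma)$, hence simple by \cite[Theorem~3.1]{Ro25}. So the inclusion is $C^*$-irreducible, and the twisted Galois correspondence \cite[Theorem~4.4]{CS19}, \cite[Theorem~5.2]{BO23} applies, as in Remark~\ref{r:galois}. Strictly, Cameron--Smith need a simple coefficient algebra and an outer twisted action of $\Gamma/P_\infty$. Both follow from $C^*$-irreducibility, which forces $\Cr(P_\infty,\sigma)'\cap\Cr(\Gamma,\sigma)=\C$.

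Your route needs no group-theoretic input beyond what the corollary already used. The paper's route does not depend on selflessness and obtains $C^*$-irreducibility from classical criteria.

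Your self-contained sketch of the correspondence is wrong as written, though the citation route makes it unnecessary. Averaging $uau^*$ over $u\in U(\Cr(P_\infty,\sigma))$ does not isolate the component of $a$ on a coset $gP_\infty$. By the relative Dixmier property you invoke, such averages can collapse $a$ all the way to $\tau(a)1$.

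The fix is to apply the relative Dixmier property to $a\lambda_\sigma(g)^*$ and then multiply on the right by $\lambda_\sigma(g)$:
\begin{itemize}
\item By normality of $P_\infty$, $\lambda_\sigma(g)^*u^*\lambda_\sigma(g)\in\Cr(P_\infty,\sigma)$, so each $ua\lambda_\sigma(g)^*u^*\lambda_\sigma(g)$ lies in $A$.
\item Convex combinations of these approximate $\tau(a\lambda_\sigma(g)^*)\lambda_\sigma(g)=\hat a(g)\lambda_\sigma(g)$.
\item Hence $\hat a(g)\neq0$ forces $\lambda_\sigma(g)\in A$.
\end{itemize}
Your $\Gamma'$ is a subgroup containing $P_\infty$. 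Every $a\in A$ has Fourier support in $\Gamma'$, so it is fixed by the canonical conditional expectation onto $\Cr(\Gamma',\sigma)$. This gives $A=\Cr(\Gamma',\sigma)$ without any appeal to outerness or simplicity.
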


\begin{proof}
The proofs of Corollary~\ref{c:braids} and Theorem~\ref{t:B} apply with $B_\infty$ replaced by $\Gamma$, and with Theorem~\ref{t:twisted} in place of Theorem~\ref{t:criterion} and Lemma~\ref{l:free}. Intermediate inclusions are handled as in Sections~\ref{sec:criterion} and~\ref{sec:selfless}. For the last part note that $P_\infty$ is normal in $\Gamma$ and $C^*$-simple. Moreover every nontrivial element of $B_\infty$ has an infinite $P_\infty$-conjugacy class \cite[Section~6]{Om20}. Hence $\Cr(P_\infty,\sigma)$ is simple \cite[Corollary~4.5]{BK}, and $(\Gamma,P_\infty,\sigma)$ satisfies the relative Kleppner condition. Now \cite[Theorem~6.2]{BO23} gives the claim.
\end{proof}

\begin{example}\label{ex:scalars}
Commutators cannot be avoided in general. The abelianization of $P_\infty$ is free abelian on the classes of the $a_{i,j}$. Let $\ell_{i,j}\colon P_\infty\to\Z$ be the corresponding coordinate maps. Put $\mu=\ell_{1,2}$ and $\nu=\sum_{i\geq3}\ell_{i,i+1}$. The sum is finite on each braid. For $\theta\in\R\setminus\Z$ the bicharacter $\sigma(g,h)=e^{2\pi i\theta\mu(g)\nu(h)}$ is a $2$-cocycle on $P_\infty$. For $i\geq3$ the braids $a_{1,2}$ and $a_{i,i+1}$ commute. But
\[
\lambda_\sigma(a_{1,2})\lambda_\sigma(a_{i,i+1})=e^{2\pi i\theta}\lambda_\sigma(a_{i,i+1})\lambda_\sigma(a_{1,2}).
\]
So $\lambda_\sigma(a_{1,2})$ does not commute with $\lambda_\sigma(a_{i,i+1})$, however large $i$ is. In particular $\sigma$ is not a coboundary. If $2\theta\notin\Z$, then $\sigma$ is not cohomologous to the restriction of a $2$-cocycle on $B_\infty$. Indeed conjugation by $\Delta_{i+1}$ interchanges $a_{1,2}=s_1^2$ and $a_{i,i+1}=s_i^2$ \cite{KT08}. For a $2$-cocycle on $B_\infty$ the scalar $e^{2\pi i\theta}$ above would then equal its inverse.
\end{example}

\section{Artin's representation}\label{sec:artin}

We consider Artin's representation $\alpha$ of $B_\infty$ on the free group $\F_\infty$ with generators $x_1,x_2,\dotsc$ \cite[Section~6]{Om20}. Let $A_\infty\leq B_\infty$ be the group of braids that fix the endpoint of the first string. It is the analogue of the annular braid group in \cite{Om20}, with the order of the strings reversed.

\begin{lemma}\label{l:annular}
The assignment $a_{1,j}\mapsto x_{j-1}$ and $s_k\mapsto s_{k-1}$ for $j,k\geq2$ defines an isomorphism $A_\infty\simeq\F_\infty\rtimes_\alpha B_\infty$. Its restriction to $P_\infty$ is the isomorphism $P_\infty\simeq\F_\infty\rtimes_\alpha P_\infty$ of \cite[Section~6]{Om20}.
\end{lemma}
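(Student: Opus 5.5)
The plan is to realize $A_\infty$ as an internal semidirect product $\F_\infty\rtimes B'$. Here $\F_\infty=\langle a_{1,j}\rangle_{j\geq2}$ is the free group from before. The group $B'=\langle s_2,s_3,\dotsc\rangle$ is the copy of $B_\infty$ acting on strands $2,3,\dotsc$. We then check that the conjugation action of $B'$ on $\F_\infty$ matches Artin's action $\alpha$ under the stated relabelling.

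First, $B'\leq A_\infty$, since the $s_k$ with $k\geq2$ do not move the first strand. Also $\F_\infty\leq P_\infty\leq A_\infty$. The map $s_k\mapsto s_{k-1}$ is an isomorphism $B'\to B_\infty$. This holds because conjugation by $(s_1\dotsm s_{m})$ shifts $s_k$ to $s_{k+1}$ within $B_{m+1}$, so $\langle s_2,\dotsc,s_m\rangle\simeq B_{m-1}$ compatibly in $m$. Next, consider the forgetful map from $A_\infty$ (the braids whose first string ends at the first point) to the braid group on strands $2,3,\dotsc$, obtained by deleting the first string. As in the proof of Proposition~\ref{p:free}, its kernel is $\F_\infty$, by \cite[Corollary~1.23]{KT08} applied in each $B_{n}$ and passing to the union. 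The map restricted to $B'$ is the identity onto the target. Hence $B'$ is a complement, and $A_\infty=\F_\infty\rtimes B'$, with $\F_\infty$ normal in $A_\infty$.

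The main step is to identify the action. We need to compute $s_k a_{1,j}s_k^{-1}$ for $k\geq2$ and compare it with $\alpha(s_{k-1})(x_{j-1})$, using the convention for $\alpha$ in \cite[Section~6]{Om20}. The computation is local to $B_{k+2}$. For $j\notin\{k,k+1\}$ one expects $s_k a_{1,j}s_k^{-1}=a_{1,j}$. For the other two cases one expects
\[
s_k a_{1,k+1}s_k^{-1}=a_{1,k},\qquad s_k a_{1,k}s_k^{-1}=a_{1,k}a_{1,k+1}a_{1,k}^{-1},
\]
or the mirror version, depending on orientation conventions. Artin's formulas have the same shape: $x_i\mapsto x_ix_{i+1}x_i^{-1}$, $x_{i+1}\mapsto x_i$. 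I expect the only real obstacle to be matching conventions. If the paper's $\alpha$ uses the inverse or the reversed formula, I would adjust by composing with the automorphism $x_i\mapsto x_i^{-1}$ or by noting the reversed string order mentioned before the lemma. These formulas can be verified from the defining expression $a_{i,j}=s_{j-1}\dotsm s_{i+1}s_i^2s_{i+1}^{-1}\dotsm s_{j-1}^{-1}$ together with the braid relations.

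Given this, the assignment in the lemma is an isomorphism of semidirect products $\F_\infty\rtimes B'\to\F_\infty\rtimes_\alpha B_\infty$. For the restriction, $A_\infty\cap P_\infty=P_\infty$, and $B'\cap P_\infty$ is the pure braid group on strands $2,3,\dotsc$, which maps to $P_\infty$. So the restriction is $P_\infty\simeq\F_\infty\rtimes_\alpha P_\infty$. Since both isomorphisms are given by the same generator assignment, $a_{1,j}\mapsto x_{j-1}$ with pure braids shifted by one strand, this agrees with the decomposition in \cite[Section~6]{Om20}.
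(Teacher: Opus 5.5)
Your structural part is the paper's argument: forgetting the first string maps $A_\infty$ onto the braid group of strings $2,3,\dotsc$ with kernel $\F_\infty=\langle a_{1,j}\rangle$ \cite[Corollary~1.23]{KT08}, and $\langle s_2,s_3,\dotsc\rangle$ is a section. There is a small slip: conjugation by $s_1\dotsm s_m$ carries $B_m=\langle s_1,\dotsc,s_{m-1}\rangle$ onto $\langle s_2,\dotsc,s_m\rangle$, so the latter is $B_m$, not $B_{m-1}$.

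\textbf{The gap.} It lies in the step you call the main one, which is the real content of the lemma. One must show that the specific assignment $a_{1,j}\mapsto x_{j-1}$, $s_k\mapsto s_{k-1}$ turns conjugation by $s_k$ into $\alpha(s_{k-1})$. You leave this at ``one expects''. Moreover, the formulas you display are false for the paper's definition of $a_{i,j}$: they describe conjugation by $s_k^{-1}$, not by $s_k$. Since $a_{1,k+1}=s_ka_{1,k}s_k^{-1}$ by definition, the correct identities are
\[
s_ka_{1,k}s_k^{-1}=a_{1,k+1},\qquad s_ka_{1,k+1}s_k^{-1}=a_{1,k+1}^{-1}a_{1,k}a_{1,k+1},
\]
together with $s_ka_{1,j}s_k^{-1}=a_{1,j}$ for $j\notin\{k,k+1\}$.

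The second identity is not formal. One has $s_ka_{1,k+1}s_k^{-1}=a_{k,k+1}a_{1,k}a_{k,k+1}^{-1}$ since $s_k^2=a_{k,k+1}$. One then needs the pure braid relation $a_{1,k}a_{1,k+1}a_{k,k+1}=a_{1,k+1}a_{k,k+1}a_{1,k}$ \cite[Lemma~1.8.2]{Bi74}.

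The third identity needs an argument when $j\geq k+2$. Write $a_{1,j}=ws_1^2w^{-1}$ with $w=s_{j-1}\dotsm s_2$, and use $s_kw=ws_{k+1}$ and $s_{k+1}s_1=s_1s_{k+1}$.

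\textbf{Why no convention change is needed.} These identities are exactly \cite[(1.1)]{Om20} for $\alpha(s_{k-1})$ after substituting $x_{j-1}=a_{1,j}$. This exact match is what makes the stated assignment an isomorphism of semidirect products. It also gives the agreement with \cite[Section~6]{Om20} claimed in the second sentence.

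Your fallbacks would not establish the statement:
\begin{itemize}
\item Precomposing with $x_i\mapsto x_i^{-1}$ changes the assignment in the lemma. It also does not help, since conjugating $\alpha(s_i)$ by this automorphism gives neither $\alpha(s_i)$ nor $\alpha(s_i)^{-1}$.
\item Reversing the order of the generators is not available on $\F_\infty=\langle x_1,x_2,\dotsc\rangle$.
\item The remark about reversed string order before the lemma compares $A_\infty$ with the annular braid group of \cite{Om20}. It is not a change of convention for $\alpha$.
\end{itemize}
To close the gap you have to carry out the three computations above with the correct direction of conjugation.
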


\begin{proof}
Forgetting the first string maps $A_\infty$ onto the braid group of the strings $2,3,\dotsc$ with kernel $\langle a_{1,j}:j\geq2\rangle$ \cite[Corollary~1.23]{KT08}. The subgroup $\langle s_2,s_3,\dotsc\rangle\simeq B_\infty$ is a section. For $k\geq2$ we have
\[
s_ka_{1,k}s_k^{-1}=a_{1,k+1},\qquad s_ka_{1,k+1}s_k^{-1}=a_{1,k+1}^{-1}a_{1,k}a_{1,k+1},
\]
and $s_ka_{1,j}s_k^{-1}=a_{1,j}$ for $j\notin\{k,k+1\}$. The first identity is the definition of $a_{1,k+1}$. Since $s_k^2=a_{k,k+1}$, the second is equivalent to the relation $a_{1,k}a_{1,k+1}a_{k,k+1}=a_{1,k+1}a_{k,k+1}a_{1,k}$ \cite[Lemma~1.8.2]{Bi74}. For $j<k$ the third is clear. For $j\geq k+2$ write $a_{1,j}=ws_1^2w^{-1}$ with $w=s_{j-1}\dotsm s_2$. Then $s_kw=ws_{k+1}$, and $s_{k+1}$ commutes with $s_1$. These identities are \cite[(1.1)]{Om20} for $\alpha(s_{k-1})$ after the substitution $x_{j-1}=a_{1,j}$.
\end{proof}

\begin{corollary}\label{c:artin}
Let $\Lambda\leq B_\infty$ and $\varphi\in Z^1(\Lambda,\Hom(\F_\infty,\T))$. Then $\Cr(\F_\infty)\subseteq\Cr(\F_\infty)\rtimes^r_{\alpha^\varphi}\Lambda$ is a selfless inclusion. If $P_\infty\leq\Lambda$, then $\Cr(\F_\infty)\rtimes^r_{\alpha^\varphi}\Lambda$ is also $\Zst$-stable.
\end{corollary}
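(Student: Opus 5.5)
The plan is to realize the twisted crossed product as a reduced twisted group $C^*$-algebra of a subgroup of $B_\infty$, and then apply Corollary~\ref{c:twisted}.

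\textbf{Step 1: realize the crossed product as a twisted group algebra.} By Lemma~\ref{l:annular}, $\F_\infty\rtimes_\alpha\Lambda$ is isomorphic to the subgroup $\Gamma=\F_\infty\rtimes\Lambda'$ of $A_\infty\leq B_\infty$. Here $\Lambda'$ is the image of $\Lambda$ under the shift $s_k\mapsto s_{k+1}$, and $\F_\infty=\langle a_{1,j}\rangle$. Following \cite{Om20}, the twisted action $\alpha^\varphi$ is $\alpha^\varphi_\lambda(\lambda(x))=\varphi_\lambda(\alpha_\lambda(x))\lambda(\alpha_\lambda(x))$ on $\Cr(\F_\infty)$. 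The crossed product $\Cr(\F_\infty)\rtimes^r_{\alpha^\varphi}\Lambda$ should then be $\Cr(\F_\infty\rtimes_\alpha\Lambda,\sigma_\varphi)$ for the $2$-cocycle
\[
\sigma_\varphi((x,\lambda),(y,\mu))=\varphi_\lambda(\alpha_\lambda(y))
\]
or a variant of it. I would check the cocycle identity from the $1$-cocycle identity for $\varphi$. I would also check that the natural covariant pair gives an isomorphism of reduced algebras that maps $\Cr(\F_\infty)$ onto $\Cr(\F_\infty,\sigma_\varphi)=\Cr(\F_\infty)$; note that $\sigma_\varphi$ is trivial on $\F_\infty$. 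This identification is the step I expect to need the most care. The standard fact that twisted crossed products of group algebras by such actions are twisted group algebras of the semidirect product should settle it, by comparing the regular representations on $\ell^2(\F_\infty\times\Lambda)$.

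\textbf{Step 2: selflessness.} Transport $\sigma_\varphi$ to $\Gamma\leq B_\infty$ via Lemma~\ref{l:annular}. Since $\F_\infty\leq\Gamma$, Corollary~\ref{c:twisted} (the selfless part, resting on Theorem~\ref{t:twisted}(b) and Proposition~\ref{p:free}) gives that $\Cr(\F_\infty,\sigma)\subseteq\Cr(\Gamma,\sigma)$ is selfless. Pulling back through the isomorphism of Step~1 gives the first claim. The cocycle only needs to be defined on $\Gamma$, which Corollary~\ref{c:twisted} allows.

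\textbf{Step 3: $\Zst$-stability when $P_\infty\leq\Lambda$.} Then $\Lambda'$ contains the pure braids on strings $2,3,\dotsc$, so $\Gamma$ contains $\F_\infty\rtimes P_\infty$. By Lemma~\ref{l:annular} this group is $P_\infty$. Hence $P_\infty'\leq\Gamma$, and Corollary~\ref{c:twisted} yields that $\Cr(P_\infty',\sigma)\subseteq\Cr(\Gamma,\sigma)$ is $\Zst$-stable. Its intermediate $C^*$-algebra $\Cr(\Gamma,\sigma)$ is therefore $\Zst$-stable.

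Alternatively, one can apply Theorem~\ref{t:twisted}(a) directly with $H=P_\infty'$. For a finite $E\subseteq\Gamma$, the group $T_k$ of Lemma~\ref{l:tails}, taken with $k$ large, lies in $P_\infty\leq\Gamma$ and commutes with $E$.
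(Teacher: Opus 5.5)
Your proposal is correct and follows the paper's proof. Identify the crossed product with $\Cr(\F_\infty\rtimes_\alpha\Lambda,\sigma^\varphi)$ via \cite[Section~2.2]{Om20}, realize $\F_\infty\rtimes_\alpha\Lambda$ as a subgroup $\Gamma\leq B_\infty$ by Lemma~\ref{l:annular} (with $P_\infty\leq\Gamma$ when $P_\infty\leq\Lambda$), and apply Corollary~\ref{c:twisted}.

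There is one small slip in your alternative route. $T_k$ is not contained in $P_\infty'$, so with $H=P_\infty'$ you need $[T_k,T_k]$ as in Corollary~\ref{c:braids}, or you can simply take $H=P_\infty$ in Theorem~\ref{t:twisted}(a).
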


\begin{proof}
We have $\Cr(\F_\infty)\rtimes^r_{\alpha^\varphi}\Lambda\simeq\Cr(\F_\infty\rtimes_\alpha\Lambda,\sigma^\varphi)$, and this identifies $\Cr(\F_\infty)$ with $\Cr(\F_\infty,\sigma^\varphi)=\Cr(\F_\infty)$ \cite[Section~2.2]{Om20}. By Lemma~\ref{l:annular} the group $\F_\infty\rtimes_\alpha\Lambda$ is isomorphic to a subgroup $\Gamma$ of $B_\infty$ with $\F_\infty\leq\Gamma$. If $P_\infty\leq\Lambda$, then also $P_\infty\leq\Gamma$. Now apply Corollary~\ref{c:twisted}.
\end{proof}

\begin{remark}\label{r:galois}
The inclusion in Corollary~\ref{c:artin} is $C^*$-irreducible, since its intermediate $C^*$-algebras are selfless and hence simple \cite[Theorem~3.1]{Ro25}. So by the Galois correspondence of \cite[Theorem~4.4]{CS19}, see \cite[Theorem~5.2]{BO23}, every intermediate $C^*$-algebra equals $\Cr(\F_\infty)\rtimes^r_{\alpha^\varphi}\Lambda'$ for a unique subgroup $\Lambda'\leq\Lambda$. For $\Lambda=B_\infty$ and trivial $\varphi$, Lemma~\ref{l:annular} shows that the intermediate $C^*$-algebras of $\Cr(\F_\infty)\subseteq\Cr(A_\infty)$ are exactly the algebras $\Cr(\Gamma)$ with $\F_\infty\leq\Gamma\leq A_\infty$. We do not know whether the same holds for $\Cr(\F_\infty)\subseteq\Cr(B_\infty)$, where $\F_\infty$ is not normal (Remark~\ref{r:irred}). For nonnormal subgroups the correspondence can fail, even for $C^*$-irreducible inclusions. For instance $\Cr(\F_2)\subseteq\Cr(\F_3)$ is $C^*$-irreducible \cite[Example~5.4]{Ror23}, where $\F_3=\langle x_1,x_2,x_3\rangle$ and $\F_2=\langle x_1,x_2\rangle$. Let $E$ be generated by $\Cr(\F_2)$ and $\lambda(x_3)+\lambda(x_3)^*$. By freeness, the conditional expectation onto $\Cr(\langle x_3\rangle)$ maps $E$ into $C^*(\lambda(x_3)+\lambda(x_3)^*)$, which does not contain $\lambda(x_3)$. So $\lambda(x_3)\notin E$, and hence $E\neq\Cr(\Gamma)$ for every $\Gamma$. For von Neumann algebras compare \cite[Remark~4.5]{BO23}.
\end{remark}

For finite $n$ the picture is different. For $n\geq3$ the group $B_n/Z(B_n)$ is $C^*$-simple \cite[Remark~2.1]{Om20}. It has finite index in the mapping class group of a punctured sphere (Section~\ref{sec:selfless}). So it is acylindrically hyperbolic with trivial finite radical, and it is not inner amenable \cite{DGO}. The center $Z(B_n)$ is cyclic, so $\Cr(Z(B_n),\sigma)$ is commutative for every $2$-cocycle $\sigma$ on $B_n$. Hence $\Cr(B_n,\sigma)$ is not $\Zst$-stable by Proposition~\ref{p:inner}. Note that $B_n$ itself is inner amenable, since its center is nontrivial.

Now let $K=B_n$ or $K=P_n$. Recall that $\F_n\rtimes_\alpha B_n$ is the annular braid group $A_{n+1}$ and that $\F_n\rtimes_\alpha P_n\simeq P_{n+1}$ \cite[Section~1]{Om20}. For $\varphi\in Z^1(K,\Hom(\F_n,\T))$ we have $\Cr(\F_n)\rtimes^r_{\alpha^\varphi}K\simeq\Cr(\F_n\rtimes_\alpha K,\sigma^\varphi)$ \cite[Section~2.2]{Om20}.

\begin{proposition}\label{p:finite}
Let $2\leq n<\infty$ and let $\varphi\in Z^1(K,\Hom(\F_n,\T))$, where $K=B_n$ or $K=P_n$. Then $\Cr(\F_n)\rtimes^r_{\alpha^\varphi}K$ is not $\Zst$-stable.
\end{proposition}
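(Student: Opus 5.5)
Via the identification $\Cr(\F_n)\rtimes^r_{\alpha^\varphi}K\simeq\Cr(G,\sigma^\varphi)$ with $G=\F_n\rtimes_\alpha K$, it suffices by Proposition~\ref{p:inner} to find a central subgroup $C\leq G$ with $G/C$ not inner amenable and $\Cr(C,\sigma^\varphi)$ commutative. The latter is automatic whenever $C$ is cyclic: every $2$-cocycle on $\Z$ is a coboundary, so $\Cr(C,\sigma)\simeq\Cr(C)$. So the task reduces to a group-theoretic statement about $G=A_{n+1}$ (for $K=B_n$) and $G=P_{n+1}$ (for $K=P_n$).

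For $n\geq2$ we have $n+1\geq3$. In the case $K=P_n$, $G\simeq P_{n+1}$, and I take $C=Z(P_{n+1})=\langle z_{n+1}\rangle$, which is infinite cyclic. Then $G/C=P_{n+1}/Z(P_{n+1})$ embeds with finite index in $B_{n+1}/Z(B_{n+1})$, since $Z(P_{n+1})=Z(B_{n+1})$ and $P_{n+1}$ has finite index in $B_{n+1}$. The latter group has finite index in the mapping class group of the sphere with $n+2$ punctures (via $q_{n+1}$), as recalled before Proposition~\ref{p:finite}. For $n+2\geq5$ this mapping class group is acylindrically hyperbolic. Acylindrical hyperbolicity passes to finite index subgroups, and so does triviality of the finite radical: $P_{n+1}/Z$ is $C^*$-simple, being a finite index subgroup of the $C^*$-simple group $B_{n+1}/Z(B_{n+1})$ (alternatively it is torsion-free). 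Hence $G/C$ is not inner amenable by \cite{DGO}.

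For $K=B_n$, $G=A_{n+1}$ is a finite index subgroup of $B_{n+1}$ containing $P_{n+1}$, in fact the stabilizer of the first endpoint. Its center contains $z_{n+1}$. I take $C=\langle z_{n+1}\rangle$, and the argument is identical: $G/C$ has finite index in $B_{n+1}/Z(B_{n+1})$.

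The main obstacle is the low-rank case $n=2$, where the sphere has only $4$ punctures. There $B_3/Z(B_3)\simeq\mathrm{PSL}_2(\Z)$, and this should be handled directly. It is virtually free nonabelian, hence acylindrically hyperbolic. The finite-index subgroup $P_3/Z(P_3)$ is free of rank two, so it is not inner amenable. For $A_3$ modulo $\langle z_3\rangle$ I would note that $A_3/\langle z_3\rangle$ is a finite index subgroup of $\mathrm{PSL}_2(\Z)$ containing the free group $P_3/Z$. Such a group is virtually free with trivial finite radical, since the only finite normal subgroup of a subgroup of $\mathrm{PSL}_2(\Z)$ containing a nonabelian free normal subgroup of finite index is trivial. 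It is therefore not inner amenable. One must also double check that $z_{n+1}$ is central in $A_{n+1}$, which is clear since it is central in $B_{n+1}$.
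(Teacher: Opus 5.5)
Your proof is correct and is essentially the paper's: identify the crossed product with $\Cr(G,\sigma^\varphi)$ and take an infinite cyclic central subgroup $C$ (you take $\langle z_{n+1}\rangle$, the paper takes the full center; either works). Then note that $G/C$ is a $C^*$-simple finite-index subgroup of $B_{n+1}/Z(B_{n+1})$, hence not inner amenable, so Proposition~\ref{p:inner} applies.

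Two small remarks. First, your fallback ``alternatively it is torsion-free'' fails for $A_{n+1}/\langle z_{n+1}\rangle$, because the image of $s_1^2s_2\dotsm s_n$ has order $n$ there. So for $K=B_n$ you must use the $C^*$-simplicity route, as the paper does via \cite[Remark~2.1]{Om20}. Second, your separate treatment of $n=2$ is harmless but unnecessary: the mapping class group of the four-punctured sphere is virtually free, hence already acylindrically hyperbolic.
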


\begin{proof}
Put $G=\F_n\rtimes_\alpha K$ and $\sigma=\sigma^\varphi$. The center $C$ of $G$ is infinite cyclic \cite[Section~1]{Om20}, so $\Cr(C,\sigma)$ is commutative. The quotient $G/C$ is a $C^*$-simple subgroup of finite index in $B_{n+1}/Z(B_{n+1})$ \cite[Remark~2.1]{Om20}. As above, it is not inner amenable. So $\Cr(G,\sigma)$ is not $\Zst$-stable by Proposition~\ref{p:inner}.
\end{proof}

So $\Zst$-stability appears only in the limit $n\to\infty$. This holds even when the algebra in Proposition~\ref{p:finite} is simple with a unique tracial state, as it is for suitable $\varphi$ \cite[Theorems~3.2 and~3.4]{Om20}.

\section{Groups with block sums}\label{sec:stable}

Many families of groups $G_1\leq G_2\leq\dotsb$ come with block sums $G_m\times G_k\to G_{m+k}$ that restrict to the inclusion on $G_m$. Then $G_m$ commutes with a copy of $G_k$ in the union $G_\infty$. So Theorem~\ref{t:criterion} applies to $G_\infty$ as soon as some $G_k$ contains $\F_2$. The braid groups are one example. Artin's representation embeds $B_\infty$ in the group $\aut_f\F_\infty$ in (c) below. We list some more examples.

\begin{proposition}\label{p:stable}
The reduced group $C^*$-algebras of the following groups are $\Zst$-stable.
\begin{itemize}
\item[(a)] The Artin group of a Coxeter graph with countably many vertices, each of finite degree, and infinitely many edges.
\item[(b)] The groups $\mathrm{SL}_\infty(\Z)$, $\mathrm{GL}_\infty(\Z)$, and $\mathrm{Sp}_\infty(\Z)$, that is, the increasing unions of $\mathrm{SL}_n(\Z)$, $\mathrm{GL}_n(\Z)$, and $\mathrm{Sp}_{2n}(\Z)$ under the standard embeddings.
\item[(c)] The group $\aut_f\F_\infty$ of automorphisms of $\F_\infty=\langle x_1,x_2,\dotsc\rangle$ that fix all but finitely many $x_i$.
\end{itemize}
\end{proposition}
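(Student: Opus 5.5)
In each case I will apply Theorem~\ref{t:criterion} with $H=G$. So for every finite set $E\subseteq G$ I need to find a copy of $\F_2$ inside $C_G(E)$. Since each group is an increasing union of subgroups, any finite $E$ lies in one stage $G_m$. It then suffices to find $\F_2$ inside the centralizer of $G_m$, and I will take this copy inside a "far out" block that commutes with $G_m$.

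\emph{(b)} Suppose $E\subseteq\mathrm{SL}_m(\Z)$, embedded in the upper left corner. Matrices of the form $1_m\oplus X$ with $X\in\mathrm{SL}_2(\Z)$, acting on coordinates $m+1,m+2$, commute with all of $\mathrm{SL}_m(\Z)\oplus 1$. The matrices $\begin{pmatrix}1&2\\0&1\end{pmatrix}$ and $\begin{pmatrix}1&0\\2&1\end{pmatrix}$ generate a free group of rank two (Sanov), as in Lemma~\ref{l:tails}. For $\mathrm{GL}_\infty(\Z)$ I use the same construction. For $\mathrm{Sp}_\infty(\Z)$ I use the block $\mathrm{Sp}_2(\Z)=\mathrm{SL}_2(\Z)$ placed on the symplectic pair $(e_{n+1},f_{n+1})$. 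Under the standard embedding this block commutes with $\mathrm{Sp}_{2n}(\Z)$, because the symplectic form is an orthogonal sum.

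\emph{(c)} Suppose $E$ fixes all $x_i$ with $i>m$. Consider the automorphisms that fix $x_1,\dots,x_m$ and act only on $\langle x_{m+1},x_{m+2}\rangle$. These form a copy of $\aut\F_2$ that commutes with $E$. One checks this on generators: elements of $E$ preserve $\langle x_1,\dots,x_m\rangle$ and fix each $x_i$ with $i>m$, and elements of the copy do the reverse. Both kinds of automorphism fix the subgroup they do not move, so their composites agree on every generator. The inner automorphisms of $\F_2$ form a free subgroup of rank two in $\aut\F_2$. Alternatively, I could use the Nielsen transvections $x_{m+1}\mapsto x_{m+1}x_{m+2}^2$ and $x_{m+2}\mapsto x_{m+2}x_{m+1}^2$, whose images in $\mathrm{GL}_2(\Z)$ generate a free group; I would pick whichever is cleanest.

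\emph{(a)} This is the main obstacle. Let $\Gamma$ be the Coxeter graph with vertex set $V$ and let $A_\Gamma$ be its Artin group. A finite set $E$ lies in $A_{\Gamma_W}$ for some finite $W\subseteq V$. Because each vertex has finite degree, the closed neighbourhood $W'=W\cup N(W)$ is finite. Because $\Gamma$ has infinitely many edges, there is an edge $\{u,v\}$ with $u,v\notin W'$. Then $u$ and $v$ are not adjacent to any vertex of $W$, so their generators commute with all generators from $W$, and hence $\langle s_u,s_v\rangle$ centralizes $E$. Standard parabolic subgroup theory (van der Lek) shows that $\langle s_u,s_v\rangle$ is the dihedral Artin group with edge label $m_{uv}\in\{3,4,\dots,\infty\}$. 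I must check that this dihedral group contains $\F_2$. If $m_{uv}=\infty$, it is itself free of rank two. If $m_{uv}$ is finite, it is a central extension of a virtually free nonelementary group, namely $\Z/p*\Z/q$ modulo the centre. A lift of a free subgroup of the quotient gives a free subgroup, since a free group lifts through any extension. For $m_{uv}=3$ this is just Lemma~\ref{l:tails}, where $s_u^2,s_v^2$ generate $\F_2$. The care needed is in the convention for edges: an absent edge means the generators commute, and every edge has label at least $3$. With this convention, the choice above gives the commuting copy, and Theorem~\ref{t:criterion} applies.
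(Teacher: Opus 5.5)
Your parts (b) and (c) follow the paper's proof: a block copy of $\mathrm{SL}_2(\Z)=\mathrm{Sp}_2(\Z)$ in the next coordinates, and the inner automorphisms of $\langle x_{m+1},x_{m+2}\rangle$. In (a) you choose the far-away edge in the same way as the paper.

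\textbf{A false step in (c).} It does not follow from ``$E$ fixes all $x_i$ with $i>m$'' that $E$ preserves $\langle x_1,\dots,x_m\rangle$. Take $\phi(x_1)=x_1x_2$ and $\phi(x_i)=x_i$ for $i\geq2$, so $\phi$ fixes every $x_i$ with $i>1$. Let $m=1$, and let $\psi$ be conjugation by $x_3$ on $\langle x_2,x_3\rangle$, extended by the identity. Then $\psi\phi(x_1)=x_1x_3x_2x_3^{-1}\neq x_1x_2=\phi\psi(x_1)$.

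The repair is immediate. $E$ is finite and each $\phi(x_i)$ involves finitely many letters. So enlarge $m$ until $\phi(x_i)\in\langle x_1,\dots,x_m\rangle$ for all $\phi\in E$ and $i\leq m$; enlarging $m$ keeps the condition for $i>m$. Then your check on generators is correct. This is what the paper's phrasing encodes: $\aut_f\F_\infty$ is the increasing union of the groups $\aut\langle x_1,\dots,x_n\rangle$, extended by the identity.

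\textbf{A different route in (a).} The paper cites Crisp--Paris: $s_u^2$ and $s_v^2$ generate $\F_2$ in $A_\Gamma$ whenever $m_{uv}\geq3$. This gives the free subgroup directly, without identifying $\langle s_u,s_v\rangle$. You instead use van der Lek to identify $\langle s_u,s_v\rangle$ with the dihedral Artin group, then lift a free subgroup from its central quotient. This trades the Tits-conjecture theorem for the older parabolic subgroup theorem plus an elementary computation. It works, and both citations pass to infinite graphs because $A_\Gamma$ is the direct limit over finite subgraphs.

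One detail is off. The central quotient is $\Z/2*\Z/m_{uv}$ only for odd $m_{uv}$; for even $m_{uv}$ it is $\Z*\Z/(m_{uv}/2)$. That group is still virtually free and not virtually cyclic, so it contains $\F_2$ and your lifting argument is unaffected.
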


\begin{proof}
In each case every finite subset of the group lies in a subgroup that commutes with a copy of $\F_2$. Then Theorem~\ref{t:criterion} applies.

(a) Let $V_0$ be a finite set of vertices. Only finitely many edges meet $V_0$ or its neighbors. So we can choose an edge $\{u,v\}$ that meets neither. Then $s_u$ and $s_v$ commute with $s_w$ for all $w\in V_0$. Moreover $s_u^2$ and $s_v^2$ generate a free group of rank two \cite{CP01}. For $B_\infty$ this is Lemma~\ref{l:tails}.

(b) The group $\mathrm{SL}_n(\Z)$ commutes with the block-diagonal copy of $\mathrm{SL}_2(\Z)$ in the next two coordinates, and $\mathrm{SL}_2(\Z)$ contains $\F_2$. The same works for $\mathrm{GL}$ and for $\mathrm{Sp}$, since $\mathrm{Sp}_2(\Z)=\mathrm{SL}_2(\Z)$.

(c) Automorphisms of $\langle x_1,\dotsc,x_n\rangle$ commute with automorphisms of $\langle x_{n+1},x_{n+2}\rangle$, all extended by the identity. The inner automorphisms of $\langle x_{n+1},x_{n+2}\rangle$ form a free group of rank two.
\end{proof}

\begin{remark}\label{r:stable}
If a group $G$ as above is also $C^*$-simple and exact, then $\Cr(G)$ has a unique tracial state \cite{BKKO}. So it is selfless by \cite[Theorem~3]{Oz}. For instance $\mathrm{SL}_\infty(\Z)$ is the increasing union of the $C^*$-simple groups $\mathrm{SL}_{2k+1}(\Z)$ \cite{BCdlH}. So it is $C^*$-simple. Then $\mathrm{GL}_\infty(\Z)$ is $C^*$-simple by \cite[Theorem~1.4]{BKKO}, since the centralizer of $\mathrm{SL}_\infty(\Z)$ in it is trivial. Both groups are exact as increasing unions of linear groups \cite{BO08}. Hence $\Cr(\mathrm{SL}_\infty(\Z))$ and $\Cr(\mathrm{GL}_\infty(\Z))$ are simple, $\Zst$-stable, and selfless. They are also singly generated \cite[Theorem~3.7]{ThW14}. For $\mathrm{SL}_\infty(\Z)$ the selflessness also follows from \cite{Vi} and \cite[Theorem~4.1]{Ro25}.
\end{remark}

\subsection*{AI statement}
The author asked Claude whether $\Cr(B_\infty)$ is selfless. The results of this paper were then developed in dialogue with Claude, including Theorem~\ref{t:criterion} and Proposition~\ref{p:free}. It otherwise helped to check references and improve the exposition. The author has checked all arguments and takes full responsibility for the content.


\begin{thebibliography}{99}

\bibitem{AD19}
C.~Abbott and F.~Dahmani.
\newblock Property {$P_{\mathrm{naive}}$} for acylindrically hyperbolic groups.
\newblock {\em Math. Z.}, 291:555--568, 2019.

\bibitem{AGKEP}
T.~Amrutam, D.~Gao, S.~{Kunnawalkam Elayavalli}, and G.~Patchell.
\newblock Strict comparison in reduced group {$C^*$}-algebras.
\newblock {\em Invent. Math.}, 242(3):639--657, 2025.

\bibitem{BO23}
E.~B{\'e}dos and T.~Omland.
\newblock {$C^*$}-irreducibility for reduced twisted group {$C^*$}-algebras.
\newblock {\em J. Funct. Anal.}, 284(5):Paper No. 109795, 2023.

\bibitem{BCdlH}
M.~Bekka, M.~Cowling, and P.~de~la Harpe.
\newblock Some groups whose reduced {$C^*$}-algebra is simple.
\newblock {\em Publ. Math. IH\'ES}, 80:117--134, 1994.

\bibitem{Bi74}
J.~S. Birman.
\newblock {\em Braids, links, and mapping class groups}.
\newblock Ann. of Math. Stud. 82. Princeton Univ. Press, 1974.

\bibitem{Bow08}
B.~H. Bowditch.
\newblock Tight geodesics in the curve complex.
\newblock {\em Invent. Math.}, 171(2):281--300, 2008.

\bibitem{BKKO}
E.~Breuillard, M.~Kalantar, M.~Kennedy, and N.~Ozawa.
\newblock {$C^*$}-simplicity and the unique trace property for discrete groups.
\newblock {\em Publ. Math. IH\'ES}, 126:35--71, 2017.

\bibitem{BO08}
N.~P. Brown and N.~Ozawa.
\newblock {\em {$C^*$}-algebras and finite-dimensional approximations}.
\newblock Grad. Stud. Math. 88. Amer. Math. Soc., 2008.

\bibitem{BK}
R.~S. Bryder and M.~Kennedy.
\newblock Reduced twisted crossed products over {$C^*$}-simple groups.
\newblock {\em Int. Math. Res. Not. IMRN}, 2018(6):1638--1655, 2018.

\bibitem{CS19}
J.~Cameron and R.~R. Smith.
\newblock A {G}alois correspondence for reduced crossed products of unital simple {$C^*$}-algebras by discrete groups.
\newblock {\em Canad. J. Math.}, 71:1103--1125, 2019.
\newblock Corrigendum in {\em Canad. J. Math.}, 72:557--562, 2020.

\bibitem{CP01}
J.~Crisp and L.~Paris.
\newblock The solution to a conjecture of {T}its on the subgroup generated by the squares of the generators of an {A}rtin group.
\newblock {\em Invent. Math.}, 145(1):19--36, 2001.

\bibitem{DGO}
F.~Dahmani, V.~Guirardel, and D.~Osin.
\newblock Hyperbolically embedded subgroups and rotating families in groups acting on hyperbolic spaces.
\newblock {\em Mem. Amer. Math. Soc.}, 245(1156), 2017.

\bibitem{Ef75}
E.~G. Effros.
\newblock Property {$\Gamma$} and inner amenability.
\newblock {\em Proc. Amer. Math. Soc.}, 47:483--486, 1975.

\bibitem{FM}
B.~Farb and D.~Margalit.
\newblock {\em A primer on mapping class groups}.
\newblock Princeton Math. Ser. 49. Princeton Univ. Press, 2012.

\bibitem{HKEPR}
B.~Hayes, S.~{Kunnawalkam Elayavalli}, G.~Patchell, and L.~Robert.
\newblock Selfless inclusions of {$C^*$}-algebras.
\newblock Preprint, arXiv:2510.13398, 2025.

\bibitem{KT08}
C.~Kassel and V.~Turaev.
\newblock {\em Braid groups}.
\newblock Grad. Texts in Math. 247. Springer, 2008.

\bibitem{MM99}
H.~A. Masur and Y.~N. Minsky.
\newblock Geometry of the complex of curves. {I}. {H}yperbolicity.
\newblock {\em Invent. Math.}, 138(1):103--149, 1999.

\bibitem{Om20}
T.~Omland.
\newblock Dynamical systems and operator algebras associated to {A}rtin's representation of braid groups.
\newblock {\em J. Operator Theory}, 83(1):55--72, 2020.

\bibitem{Om26}
T.~Omland.
\newblock Selflessness for twisted group {$C^*$}-algebras of amenable groups and their inclusions.
\newblock Preprint, arXiv:2606.27198, 2026.

\bibitem{Oz}
N.~Ozawa.
\newblock Proximality and selflessness for group {$C^*$}-algebras.
\newblock Preprint, arXiv:2508.07938v8, 2026.

\bibitem{Ro25}
L.~Robert.
\newblock Selfless {$C^*$}-algebras.
\newblock {\em Adv. Math.}, 478:Paper No. 110409, 2025.

\bibitem{Ror23}
M.~R{\o}rdam.
\newblock Irreducible inclusions of simple {$C^*$}-algebras.
\newblock {\em Enseign. Math.}, 69:275--314, 2023.

\bibitem{Sa25}
P.~Sarkowicz.
\newblock Tensorially absorbing inclusions of {$C^*$}-algebras.
\newblock {\em Canad. J. Math.}, 77(4):1315--1346, 2025.

\bibitem{ThW14}
H.~Thiel and W.~Winter.
\newblock The generator problem for {$\mathcal{Z}$}-stable {$C^*$}-algebras.
\newblock {\em Trans. Amer. Math. Soc.}, 366(5):2327--2343, 2014.

\bibitem{Vi}
I.~Vigdorovich.
\newblock Selfless reduced {$C^*$}-algebras of linear groups.
\newblock Preprint, arXiv:2602.10616, 2026.

\end{thebibliography}
\end{document}